\providecommand{\U}[1]{\protect\rule{.1in}{.1in}}
\providecommand{\U}[1]{\protect \rule{.1in}{.1in}}
\providecommand{\U}[1]{\protect \rule{.1in}{.1in}}
\newtheorem{theorem}{Theorem}[section]
\newtheorem{lemma}{Lemma}[section]
\newtheorem{corollary}{Corollary}[section]
\newtheorem{definition}{Definition}[section]
\numberwithin{equation}{section}
\theoremstyle{remark}
\newtheorem{remark}{Remark}[section]
\numberwithin{equation}{section}
\begin{document}
\title[The CR Torsion Flow ]{Short-Time Existence Theorem for the CR Torsion Flow }
\author{Shu-Cheng Chang$^{1\ast}$}
\address{$^{1}$Department of Mathematics and Taida Institute for Mathematical Sciences,
National Taiwan University, Taipei 10617, Taiwan\\
Current address : Yau Mathematical Sciences Center, Tsinghua University,
Beijing, China}
\email{scchang@math.ntu.edu.tw }
\author{Chin-Tung Wu$^{2\ast}$}
\address{$^{2}$Department of Applied Mathematics, National PingTung University,
PingTung 90003, Taiwan, R.O.C.}
\email{ctwu@mail.nptu.edu.tw }
\thanks{$^{\ast}$Research supported in part by MOST of Taiwan.}
\subjclass{53C21, 32G07.}
\keywords{Ricci flow, Torsion flow, CR Einstein-Hilbert functional, Lax-Milgram theorem,
CR Paneitz operator, CR pluriharmonic operator, Tanaka-Webster scalar
curvature, Pseudohermitian torsion, Cauchy-Kovalevskaya theorem.}
\maketitle

\begin{abstract}
In this paper, we study the torsion flow which is served as the CR analogue of
the Ricci flow in a closed pseudohermitian manifold. We show that there exists
a unique smooth solution to the CR torsion flow in a small time interval with
the CR pluriharmonic function as an initial data. In spirit, it is the CR
analogue of the Cauchy-Kovalevskaya local existence and uniqueness theorem for
analytic partial differential equations associated with Cauchy initial value problems.

\end{abstract}

\section{Introduction}

One of the goals for differential geometry and geometric analysis is to
understand and classify the singularity models of a nonlinear geometric
evolution equation, and to connect it to the existence problem of geometric
structures on manifolds. For instance in 1982, R. Hamilton (\cite{h3})
introduced the Ricci flow. Then by studying the singularity models (\cite{h2},
\cite{pe1}, \cite{pe2}, \cite{pe3}) of Ricci flow, R. Hamilton and G. Perelman
solved the Thurston geometrization conjecture and Poincare conjecture for a
closed $3$-manifold in 2002.

It is natural then to investigate a corresponding geometrization problem of
closed CR manifolds by finding a CR analogue of the Ricci flow. More
precisely, let us recall that a strictly pseudoconvex CR structure on a
pseudohermitian $(2n+1)$-manifold $(M,\xi,J,\theta)$ is given by a cooriented
plane field $\xi=\ker\theta$, where $\theta$ is a contact form, together with
a compatible complex structure $J$. Given this data, there is a natural
connection, the so-called Tanaka-Webster connection or pseudohermitian
connection. We denote the torsion of this connection by $A_{J,\theta}$, and
the Tanaka-Webster curvature by $W$. Then, following this direction, we
consider the following so-called torsion flow on $(M,\xi,J,\theta
)\times\lbrack0,T):$
\begin{equation}
\left\{
\begin{array}
[c]{l}%
\frac{\partial J}{\partial t}=2A_{J,\theta},\\
\frac{\partial\theta}{\partial t}=-2W\theta,\\
\theta=e^{2\lambda}\theta_{0}%
\end{array}
\right.  \label{1}%
\end{equation}
It seems to us that the torsion flow (\ref{1}) is the right CR analogue of the
Ricci flow. \ It is the negative gradient flow of CR Einstein-Hilbert
functional (\ref{2015E}) in view of (\ref{2015D}). More precisely, let
$\left\{  T,Z_{\alpha},Z_{\bar{\beta}}\right\}  $ be a frame of $TM\otimes
\mathbb{C}$ with $\xi\otimes\mathbb{C=}T^{1,0}\oplus T^{0,1}$, where
$\{Z_{\alpha}\}$ is any local frame of $T^{1,0}$, and $Z_{\bar{\beta}%
}=\overline{Z_{\beta}}\in T^{0,1}$. Then $\left\{  \theta,\theta^{\alpha
},\theta^{\bar{\beta}}\right\}  $, the coframe dual to $\left\{  T,Z_{\alpha
},Z_{\bar{\beta}}\right\}  $, satisfies
\[
d\theta=ih_{\alpha\bar{\beta}}\theta^{\alpha}\wedge\theta^{\bar{\beta}},
\]
where $h_{\alpha\bar{\beta}}$ is a positive definite Levi metric. By the
Gram-Schmidt process we can always choose $Z_{\alpha}$ such that
$h_{\alpha\bar{\beta}}=\delta_{\alpha\bar{\beta}}$; throughout this paper, we
shall take such a frame. Then we write $J=i\theta^{\alpha}\otimes Z_{\alpha
}-i\theta^{\overline{\alpha}}\otimes Z_{\overline{\alpha}}$. Define
\[
A_{J,\theta}:=A^{\bar{\beta}}{}_{\alpha}Z_{\bar{\beta}}\otimes\theta^{\alpha
}+A^{\beta}{}_{\bar{\alpha}}Z_{\beta}\otimes\theta^{\bar{\alpha}},
\]
and
\[
E=E_{\alpha}{}^{\bar{\beta}}\theta^{\alpha}\otimes Z_{\bar{\beta}}%
+E_{\bar{\alpha}}{}^{\beta}\theta^{\bar{\alpha}}\otimes Z_{\beta},
\]
and consider the general CR flow on $(M,\xi,J,\theta)\times\lbrack0,T)$ given
by
\begin{equation}
\left\{
\begin{array}
[c]{l}%
\frac{\partial J(t)}{\partial t}=2E(t),\\
\frac{\partial\theta}{\partial t}=2\eta(t)\theta(t).
\end{array}
\right.  \label{2018aaa}%
\end{equation}
The CR Einstein-Hilbert functional is defined by
\begin{equation}
\mathcal{E}(J(t),\theta(t))=\int_{M}W(t)d\mu. \label{2015E}%
\end{equation}
Here $d\mu=\theta\wedge d\theta^{n}$ is the volume form. Since $\frac
{\partial}{\partial t}d\mu=4\eta(t)d\mu$, it follows that
\begin{equation}%
\begin{array}
[c]{ccl}%
\frac{d}{dt}\mathcal{E}(J(t),\theta(t)) & = & -\int_{M}\{(A^{\bar{\alpha}}%
{}_{\beta}E^{\beta}{}_{\bar{\alpha}}+A^{\alpha}{}_{\bar{\beta}}E_{{}}%
^{\bar{\beta}}{\alpha})-2\eta W\}d\mu\\
& = & -2\int_{M}\Vert A_{J,\theta}\Vert^{2}d\mu-2\int_{M}W^{2}d\mu\\
& \leq & 0
\end{array}
\label{2015D}%
\end{equation}
if we put
\[
E=A_{J,\theta}\text{\ \ \ \textrm{and\ }\ \ }d\eta(t)=-W(t).
\]

Unlike the Ricci flow, it is not clear that one can apply the so-called
DeTurck's trick (\cite{de}) to prove the short-time solution of the torsion
flow. It is therefore natural to consider the following version of CR torsion
flow ( see section $4$ for details)
\begin{equation}
\left\{
\begin{array}
[c]{l}%
\frac{\partial J}{\partial t}=2A_{J,\theta},\\
\frac{\partial\theta}{\partial t}=-2W\theta,\ \\
\text{\textrm{\ }}\theta=e^{2\gamma}\theta_{0},\ \gamma(0)=\gamma
_{0};\ \overset{0}{P_{\beta}}(\gamma_{0})=0=W^{\perp}(x,0).
\end{array}
\right.  \label{2015BBB}%
\end{equation}
on $M\times\lbrack0,T)$ in which \textbf{the initial Tanaka-Webster scalar
curvature }$W(x,0)$ w.r.t. $e^{\gamma_{0}}\theta_{0}$ and $\gamma_{0}$ are
\textbf{pluriharmonic }with respect to $\theta_{0}$ (see section $3$) and%
\[
W^{\perp}(x,t):=W(x,t)-W^{\ker}(x,t).
\]
Here $W^{\ker}$ is the CR-pluriharmonic function with respect to $\theta_{0}$
and $\xi=\ker\theta_{0}.$ In fact, if $M$ is a hypersurface in $\mathbb{C}%
^{n+1}$, i.e. $M=\partial\Omega$ for a strictly pseudoconvex bounded domain
$\Omega$ in $\mathbb{C}^{n+1}$, then for any pluriharmonic function
$u:\mathbf{U}\rightarrow\mathbb{R}$ ($\partial\overline{\partial}u=0$) with
$f:=u|_{M}$, it follows that $f$ is a CR pluriharmonic function (see
Definition \ref{4}). Moreover, for a simply connected $\mathbf{U}%
\subset\overline{\Omega}$, there exists a holomorphic function $w$ in
$\mathbf{U}$ such that $u=\mathrm{Re}(w)$.

In this paper, we show that there exists a unique smooth solution to the CR
torsion flow (\ref{2015BBB}) in a small time interval with the CR
pluriharmonic function as an initial data. In the above spirit, it is the CR
analogue of the Cauchy-Kovalevskaya local existence and uniqueness theorem for
analytic partial differential equations associated with Cauchy initial value problems.

We first show that there always exist such $\theta_{0}$ in $\xi$
and$\ \gamma_{0}$ so that
\begin{equation}
\overset{0}{P_{\beta}}(\gamma_{0})=0=W^{\perp}(x,0). \label{2018AAA1}%
\end{equation}
It is well-known, for the CR Yamabe problem on $(M,\xi)$, that there always
exists a contact form $\overline{\theta}_{0}$ of constant Tanaka-Webster
scalar curvature in a contact class $[\widetilde{\theta}_{0}]$ with $\xi
=\ker\widetilde{\theta}_{0}$. Furthermore, we can write
\[
\overline{\theta}_{0}=e^{2\lambda_{0}}\widetilde{\theta}_{0}%
\]
for some function $\lambda_{0}$. Rewrite
\[
\overline{\theta}_{0}=e^{2\lambda_{0}}\widetilde{\theta}_{0}=e^{2(\lambda
_{0})^{\ker}}(e^{2(\lambda_{0})^{\perp}}\widetilde{\theta}_{0}),
\]
we can choose the particular background contact form $\theta_{0}$ as
\begin{equation}
\theta_{0}:=e^{2(\lambda_{0})^{\perp}}\widetilde{\theta}_{0}%
\text{\ \ and\ \ \ \ }\gamma_{0}:=(\lambda_{0})^{\ker}.\text{\ }
\label{2018AAA}%
\end{equation}
It follows that the initial contact form
\[
\theta(0)=\overline{\theta}_{0}=e^{2\gamma_{0}}\theta_{0}%
\]
satisfies (\ref{2018AAA1}). Then we are in the situation as in (\ref{2015BBB}).

In this paper, we first follow Hamilton's original ideas (\cite{h1}) to prove
the following short-time existence result for the CR torsion
flow~(\ref{2015BBB}) in a closed pseudohermitian $(2n+1)$-manifold
$(M,\xi,\theta_{0}).$ In the appendix, we also give an alternative proof from
the point view of Lax-Milgram theorem (\cite{gt}) for $n=1$.

\begin{theorem}
\label{TA} Let $(M,\xi)$ be a closed CR $(2n+1)$-manifold. \textit{Let }%
$J_{0}$\textit{\ be any }$C^{\infty}$\textit{\ smooth oriented }%
$CR$\textit{\ structure compatible with }$\xi$. If there exist $\gamma_{0}$
and \textit{ }$C^{\infty}$\textit{\ smooth oriented contact form }$\theta_{0}$
with $\xi=\ker\theta_{0}$ so that
\[
\overset{0}{P_{\beta}}(\gamma_{0})=0\ \ \mathrm{\ and\ }\ \ \ W^{\perp
}(x,0)=0.
\]
Then there exists $\delta>0$ and a unique smooth solution $(J(t),\theta(t)\,)$
to the CR torsion flow~(\ref{2015BBB}) on $(M,\xi,J_{0},\theta_{0}%
)\times\lbrack0,\delta)$ such that $(J(0),\theta(0)\,)=(J_{0},e^{2\gamma_{0}%
}\theta_{0})$.
\end{theorem}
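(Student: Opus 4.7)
The plan is to adapt Hamilton's original Nash--Moser approach for the Ricci flow from \cite{h1}, since as the authors remark DeTurck's trick does not straightforwardly apply to the torsion flow. First, I would reformulate (\ref{2015BBB}) as a coupled system for the pair $(J(t), \gamma(t))$ with $\theta(t) = e^{2\gamma(t)}\theta_0$. The scalar equation $\partial_t\gamma = -W(t)$, combined with the conformal change formula for the Tanaka--Webster curvature, becomes (up to lower-order terms) a subelliptic heat equation whose principal part is a positive multiple of the sublaplacian $\Delta_b \gamma$; the tensor equation $\partial_t J = 2 A_{J,\theta}$ is then coupled to it through the dependence of the torsion on the evolving contact form.

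Next I would compute the linearization of the combined system at $(J_0, \gamma_0)$. The variational origin of the flow in the CR Einstein--Hilbert functional (\ref{2015E})--(\ref{2015D}) makes the linearized operator formally non-negative but degenerate, with kernel consisting of deformations tangent to the symmetries of the problem, namely contact diffeomorphisms of $(M,\xi)$ and conformal rescalings by CR-pluriharmonic factors. This is precisely where the initial conditions $\overset{0}{P_{\beta}}(\gamma_0) = 0$ and $W^{\perp}(x,0) = 0$ enter: the pluriharmonic gauge $\overset{0}{P_{\beta}}(\gamma_0) = 0$ eliminates the conformal kernel direction in $\gamma$, while $W^{\perp}(x,0) = 0$ ensures that the source driving the $\gamma$-equation lies in the range of the linearized operator, compatible with the orthogonal splitting $W = W^{\ker} + W^{\perp}$.

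Having fixed the gauge, I would verify that the restricted linearization is strongly parabolic in the subelliptic sense and invoke Hamilton's Nash--Moser inverse function theorem as in \cite{h1}. The required tame estimates would be produced by the standard Folland--Stein subelliptic theory on the Heisenberg tangent structure of $(M,\xi,J_0,\theta_0)$, after which one obtains a unique smooth short-time solution on the slice. One must then check that the gauge conditions $\overset{0}{P_{\beta}}(\gamma(t)) = 0$ and $W^{\perp}(x,t) = 0$ are preserved along the flow, so that the solution on the slice indeed satisfies the full system (\ref{2015BBB}); this should follow by differentiating the gauge conditions along the flow and using the assumed initial vanishing together with the commutation properties of $\overset{0}{P_{\beta}}$ with the evolution.

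The main obstacle I anticipate is the symbol analysis: unlike the Riemannian setting where the Bianchi identity cleanly identifies the directions of degeneracy, here the coupling between the first-order torsion operator on $J$ and the subelliptic second-order operator on $\gamma$ requires careful bookkeeping, and one must make essential use of the CR pluriharmonic operator $\overset{0}{P_{\beta}}$ together with the Paneitz-type operator to isolate the genuinely parabolic directions from the gauge-invariant ones. Uniqueness then follows from a standard energy estimate once strong parabolicity on the slice is in place, and the appendix's Lax--Milgram argument for $n=1$ provides an independent check on the invertibility of the relevant linearized operator.
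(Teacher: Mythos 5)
Your proposal captures the correct high-level strategy — following Hamilton's original inverse-function-theorem approach rather than DeTurck, fixing a gauge, then invoking Folland--Stein subelliptic estimates — and you correctly anticipate that the gauge conditions must be shown to propagate. However there is a genuine gap at the step you gloss over as ``verify that the restricted linearization is strongly parabolic in the subelliptic sense.'' That verification is the entire content of the argument, and the reason it is delicate is that the linearization of $\partial_t J = 2A_{J,\theta}$ in the $J$-direction has highest-weight part $\delta_J A_{\alpha\beta}=iE_{\alpha\beta,0}$, a pure Reeb-derivative term (weight $2$ but not a sum of squares of horizontal fields). By itself this is \emph{not} subelliptic, so one cannot simply observe that the scalar equation has principal part $\Delta_b\gamma$ and declare the coupled system parabolic.

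What the paper actually does, and what your proposal is missing, is the following. Lemma~\ref{lTA} (the transformation law for $W_\alpha-inA_{\alpha\beta,}{}^\beta$ under $\theta=e^{2\gamma}\theta_0$) shows that because $\overset{0}{P_\beta}(\gamma)=0$, the quantity $\eta_\beta+inF_\beta{}^{\overline\alpha},_{\overline\alpha}$ is of lower order in $\gamma$; this is the integrability identity~(\ref{Bianchi}), playing the role of the contracted Bianchi identity in the Ricci-flow case. One then takes as gauge-fixing condition
\[
H(J,\theta)(G)=(h_\alpha+inE_{\alpha\beta,}{}^\beta)-\mathcal{O}_0(G)=0,
\]
and the crucial computation (\ref{2.8a})--(\ref{2015A}) shows that \emph{on $\ker H(J,\theta)$} the highest-weight part of the linearized torsion equation can be rewritten, using the commutation relations, with principal operator
\[
\mathcal{L}_\alpha=\Delta_b-i\alpha T,\qquad \alpha=-\tfrac{(n-1)^2}{n}=-(n-2+\tfrac1n).
\]
Subellipticity then holds precisely because this $\alpha$ lies outside the forbidden set $\{\pm n,\pm(n+2),\dots\}$ of the Folland--Stein theory. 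Without identifying this specific operator and this specific value of $\alpha$, there is no way to conclude subellipticity of the combined system, and your proposal as written would stall at exactly that point. In addition, your reading of $\overset{0}{P_\beta}(\gamma_0)=0$ as ``eliminating the conformal kernel direction'' and $W^\perp(x,0)=0$ as a range condition is not quite the mechanism: the pluriharmonic hypotheses enter by making~(\ref{Bianchi}) hold, which is what legitimizes the gauge-fixing operator $H$ above, and the preservation $W^\perp(\cdot,t)=0=\gamma^\perp(\cdot,t)$ is established \emph{before} the linearization step (it is part of setting up the modified flow~(\ref{2015bb})--(\ref{2018})), not a consequence checked afterward.
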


In particular, if we choose the special forms for $\gamma_{0}$ and $\theta
_{0}$ as in \textit{(\ref{2018AAA})}, then we have the following short-time
existence result.

\begin{corollary}
\label{C1} Let $(M,\xi)$ be a closed CR $(2n+1)$-manifold. \textit{Let }%
$J_{0}$\textit{\ be any }$C^{\infty}$\textit{\ smooth oriented }%
$CR$\textit{\ structure compatible with }$\xi$ and choose \textit{\ a
}$C^{\infty}$\textit{\ smooth oriented contact form }$\theta_{0}$ and a smooth
function $\gamma_{0}$ as in \textit{(\ref{2018AAA}). }Then there exists
$\delta>0$ and a unique smooth solution $(J(t),\theta(t)\,)$ to the CR torsion
flow~(\ref{2015BBB}) on $(M,\xi,J_{0},\theta_{0})\times\lbrack0,\delta)$ such
that $(J(0),\theta(0)\,)=(J_{0},e^{2\gamma_{0}}\theta_{0})$ with
\[
\overset{0}{P_{\beta}}(\gamma_{0})=0\ \ \mathrm{\ and\ }\ \ \ W^{\perp
}(x,0)=0.
\]

\end{corollary}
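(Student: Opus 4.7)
The plan is to deduce Corollary~\ref{C1} as a direct consequence of Theorem~\ref{TA}. Since Theorem~\ref{TA} already provides short-time existence and uniqueness whenever the two initial conditions $\overset{0}{P_{\beta}}(\gamma_{0}) = 0$ and $W^{\perp}(x,0) = 0$ are met, essentially all that remains is to check that the particular choice (\ref{2018AAA}) satisfies these two conditions.

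First I would recall the CR Yamabe result: starting from an arbitrary contact form $\widetilde{\theta}_{0}$ with $\xi = \ker\widetilde{\theta}_{0}$, there exists $\overline{\theta}_{0} = e^{2\lambda_{0}} \widetilde{\theta}_{0}$ of constant Tanaka-Webster scalar curvature. I would then split $\lambda_{0} = (\lambda_{0})^{\ker} + (\lambda_{0})^{\perp}$ according to the pluriharmonic decomposition introduced in Section~3, so that $(\lambda_{0})^{\ker}$ is a CR pluriharmonic function with respect to $\widetilde{\theta}_{0}$ and $(\lambda_{0})^{\perp}$ is in the orthogonal complement. Setting $\theta_{0} := e^{2(\lambda_{0})^{\perp}} \widetilde{\theta}_{0}$ and $\gamma_{0} := (\lambda_{0})^{\ker}$ as in (\ref{2018AAA}) gives $\theta(0) = e^{2\gamma_{0}}\theta_{0} = \overline{\theta}_{0}$.

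Next I would verify the two hypotheses. For the first, since $\gamma_{0} = (\lambda_{0})^{\ker}$ is by construction CR pluriharmonic, the characterization of CR pluriharmonic functions as the kernel of the CR Paneitz-type operator $\overset{0}{P_{\beta}}$ (Definition~\ref{4} and the surrounding discussion) gives $\overset{0}{P_{\beta}}(\gamma_{0}) = 0$ immediately. For the second, the initial Tanaka-Webster scalar curvature $W(x,0)$ is computed with respect to $\theta(0) = \overline{\theta}_{0}$, which has constant scalar curvature; since constants are CR pluriharmonic, one has $W(x,0) = W^{\ker}(x,0)$, and hence $W^{\perp}(x,0) = W(x,0) - W^{\ker}(x,0) = 0$.

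With both initial conditions verified for the prescribed $(\theta_{0},\gamma_{0})$, I would then simply invoke Theorem~\ref{TA} to obtain the desired $\delta > 0$ and the unique smooth solution $(J(t),\theta(t))$ on $[0,\delta)$ with $(J(0),\theta(0)) = (J_{0}, e^{2\gamma_{0}} \theta_{0})$. There is no genuine obstacle here beyond ensuring that the pluriharmonic decomposition $\lambda_{0} = (\lambda_{0})^{\ker} + (\lambda_{0})^{\perp}$ is available in the required regularity, which is guaranteed by the orthogonal projection onto $\ker \overset{0}{P_{\beta}}$ discussed in Section~3; the only mildly subtle point is checking that $\theta_{0} = e^{2(\lambda_{0})^{\perp}} \widetilde{\theta}_{0}$ retains the same contact plane $\xi$, which is automatic since the conformal factor is real-valued and positive.
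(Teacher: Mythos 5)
Your proof is correct and takes essentially the same route as the paper: it reduces Corollary~\ref{C1} to Theorem~\ref{TA} by verifying that the choice (\ref{2018AAA}) from the CR Yamabe solution $\overline{\theta}_0=e^{2\lambda_0}\widetilde{\theta}_0$ satisfies the two hypotheses $\overset{0}{P_\beta}(\gamma_0)=0$ and $W^\perp(x,0)=0$. You are in fact slightly more explicit than the paper in spelling out why $W^\perp(x,0)=0$ (constants are CR pluriharmonic, so the constant scalar curvature of $\overline{\theta}_0$ projects to zero under the $\perp$ component); the one point neither you nor the paper addresses head-on is that $(\lambda_0)^{\ker}$ is pluriharmonic with respect to $\widetilde{\theta}_0$ but the hypothesis involves the operator $\overset{0}{P_\beta}$ taken with respect to $\theta_0=e^{2(\lambda_0)^\perp}\widetilde{\theta}_0$, which requires the conformal covariance of the pluriharmonic condition (Remark~\ref{r1}.2 and Corollary~\ref{cTA} supply this cleanly when $n=1$, while for $n\geq2$ it only holds up to lower order), but this is a feature of the paper's own presentation rather than a defect introduced by your argument.
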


\begin{remark}
\label{R1} Let $(M,\xi)$ be a closed CR $3$-manifold with a
\textbf{pseudo-Einstein contact form }$\overline{\theta}_{0}$ of\textbf{\ }the
pluriharmonic scalar curvature (in particular, the constant scalar curvature).
Then there exists a pluriharmonic function $\gamma_{0}$ such that
$\overline{\theta}_{0}=e^{\gamma_{0}}\theta_{0}$ for some
\textbf{pseudo-Einstein contact form }$\theta_{0}$ due to J. Lee (\cite{l1})
and K. Hirachi (\cite{hi}). We will study the problem of asymptotic
convergence of solutions of the CR torsion flow (\ref{2015BBB}) on this
special situation for the forthcoming topic.
\end{remark}

The torsion flow greatly simplifies if the torsion vanishes. This only happens
in very special setups. Indeed, CR $3$-manifolds with vanishing torsion are
$K$-contact, meaning that the Reeb vector field is a Killing vector field for
the contact metric $g=\frac{1}{2}d\theta+\theta^{2}.$ In general, one can
still hope that the torsion flow improves properties of the contact manifold
underlying the CR-manifold. It is the case in a closed homogeneous
pseudohermitian $3$-manifold whose Lie algebra is isomorphic to $su(2)$%
.\ \ The torsion flow reduces to an ODE if we start with some appropriate
initial conditions and we are able to obtain the long-time existence and
asymptotic convergence of solutions for the (normalized) torsion flow
(\ref{2015BBB}) in this special case. More precisely, for any choice of
homogeneous complex structure $J$ on $SU(2)$, the solution of the normalized
torsion flow exists for all times and converges to the unique standard
CR-structure $J_{\mathrm{can}}$. These computations illustrate the behavior of
the torsion flow in special cases, and in these cases the torsion flow behaves
as can be expected from a Ricci-like flow. We refer to our previous paper
\cite{ckw} for more details.

In general, unlike the Hamilton Ricci flow, the problem of asymptotic
convergence of solutions of the CR torsion flow is widely open in closed CR
$3$-manifolds. \ The structure of CR torsion solitons may be a necessary step
in understanding the asymptotic convergence of solutions of the CR torsion
flow. Indeed, one expects CR torsion solitons to model singularity formations
of the CR torsion flow. In the joint works with H.-D. Cao and C.-W. Chen
(\cite{cacc1}, \cite{cacc2}), we investigate the geometry and classification
of closed three-dimensional CR Yamabe and Torsion solitons. We obtain a
classification theorem of complete three-dimensional CR Yamabe solitons of
vanishing torsion which are the same as torsion solitons. Furthermore, by
deriving the CR Hamilton's type Harnack quantity, we are able to show that any
closed three-dimensional CR torsion soliton must be the standard Sasakian
space form (\cite{t}).

We conclude this introduction with a brief plan of the paper. In Section~$2$,
we survey basic notions in CR geometry. In section $3,\ $we give some
preliminary results concerning the short time existence of the torsion flow,
in particular the evolution equation of the general CR flow (\ref{2018aaa})
and the invariance property for the CR Paneitz operator as in Lemma \ref{lTA}.
In section $4,$ We prove the main Theorem \ref{TA} as well as Corollary
\ref{C1}. In the appendix, we give an alternative proof via the
Lions-Lax-Milgram theorem.

\noindent\textbf{Acknowledgements} Part of the project was done during the
visit of the first named author to Yau Mathematical Sciences Center, Tsinghua
University. He would like to express his thanks for the warm hospitality. We
also thank Professor Otto van Koert for useful comments which leads this
current paper possible.

\section{Preliminaries}

In this section, we introduce some basic notions from pseudohermitian geometry
as in \cite{l1} and \cite{l2}, and refer to these papers for proofs and more references.

\begin{definition}
Let $M$ be a smooth manifold and $\xi\subset TM$ a subbundle. A \textbf{CR
structure} on $\xi$ consists of an endomorphism $J:\xi\rightarrow\xi$ with
$J^{2}=-id$ such that the following integrability condition holds :

(i) \  If $X,Y\in\xi$, then so is $[JX,Y]+[X,JY]$.

(ii)  $J([JX,Y]+[X,JY])=[JX,JY]-[X,Y]$.
\end{definition}

The CR structure $J$ can be extended to $\xi\otimes{\mathbb{C}}$, which we can
then decompose into the direct sum of eigenspaces of $J$. The eigenvalues of
$J$ are $i$ and $-i$, and the corresponding eigenspaces will be denoted by
$T^{1,0}$ and $T^{0,1}$, respectively. The integrability condition can then be
reformulated as
\[
X,Y\in T^{1,0} \text{ implies } [X,Y]\in T^{1,0}.
\]

Now consider a closed $2n+1$-manifold $M$ with a cooriented contact structure
$\xi=\ker\theta$. This means that $\theta\wedge d\theta^{n}\neq0$. The
\textbf{Reeb vector field} of $\theta$ is the vector field $T$ uniquely
determined by the equations
\[
{\theta}(T)=1,\quad\text{and}\quad d{\theta}(T,{\cdot})=0.
\]

A \textbf{pseudohermitian manifold} is a triple $(M^{2n+1},\theta,J),$ where
$\theta$ is a contact form on $M$ and $J$ is a CR structure on $\ker\theta$.
The \textbf{Levi form} $\left\langle \ ,\ \right\rangle $ is the Hermitian
form on $T^{1,0}$ defined by
\[
H(Z,W)=\left\langle Z,W\right\rangle =-i\left\langle d\theta,Z\wedge
\overline{W}\right\rangle .
\]

We can extend this Hermitian form $\left\langle \ ,\ \right\rangle $ to
$T^{0,1}$ by defining $\left\langle \overline{Z},\overline{W}\right\rangle
=\overline{\left\langle Z,W\right\rangle }$ for all $Z,W\in T^{1,0}$.
Furthermore, the Levi form naturally induces a Hermitian form on the dual
bundle of $T^{1,0}$, and hence on all induced tensor bundles.

We now restrict ourselves to strictly pseudoconvex manifolds, or in other
words compatible complex structures $J$. This means that the Levi form induces
a Hermitian metric $\langle\cdot,\cdot\rangle_{J,{\theta}}$ by
\[
\langle V,U\rangle_{J,{\theta}}=d{\theta}(V,JU).
\]
The associated norm is defined as usual: $|V|_{J,\theta}^{2}=\langle
V,V\rangle_{J,{\theta}}$. It follows that $H$ also gives rise to a Hermitian
metric for $T^{1,0}$, and hence we obtain Hermitian metrics on all induced
tensor bundles. By integrating this Hermitian metric over $M$ with respect to
the volume form $d\mu=\theta\wedge d\theta^{n} $, we get an $L^{2}$-inner
product on the space of sections of each tensor bundle.

The \textbf{pseudohermitian connection} or \textbf{Tanaka-Webster connection}
of $(J,\theta)$ is the connection $\nabla$ on $TM\otimes\mathbb{C}$ (and
extended to tensors) given in terms of a local frame $\{Z_{\alpha}\}$ for
$T^{1,0}$ by%

\[
\nabla Z_{\alpha}=\omega_{\alpha}{}^{\beta}\otimes Z_{\beta},\quad\nabla
Z_{\bar{\alpha}}=\omega_{\bar{\alpha}}{}^{\bar{\beta}}\otimes Z_{\bar{\beta}%
},\quad\nabla T=0,
\]
where $\omega_{\alpha}{}^{\beta}$ is the $1$-form uniquely determined by the
following equations:%

\[%
\begin{array}
[c]{rcl}%
d\theta^{\beta} & = & \theta^{\alpha}\wedge\omega_{\alpha}{}^{\beta}%
+\theta\wedge\tau^{\beta},\\
\tau_{\alpha}\wedge\theta^{\alpha} & = & 0,\\
\omega_{\alpha}{}^{\beta}+\omega_{\bar{\beta}}{}^{\bar{\alpha}} & = & 0.
\end{array}
\]

Here $\tau^{\alpha}$ is called the \textbf{pseudohermitian torsion}, which we
can also write as
\[
\tau_{\alpha}=A_{\alpha\beta}\theta^{\beta}.
\]
The components $A_{\alpha\beta}$ satisfy
\[
A_{\alpha\beta}=A_{\beta\alpha}.
\]
We often consider the \textbf{torsion tensor} given by
\[
A_{J,\theta}=A^{\alpha}{}_{\bar{\beta}}Z_{\alpha}\otimes\theta^{\bar{\beta}%
}+A^{\bar{\alpha}}{}_{\beta}Z_{\bar{\alpha}}\otimes\theta^{\beta}.
\]

We now consider the curvature of the Tanaka-Webster connection in terms of the
coframe $\{ \theta=\theta^{0},\theta^{\alpha},\theta^{\bar{\beta}}\} $. The
second structure equation gives
\[%
\begin{split}
\Omega_{\beta}{}^{\alpha}  &  =\overline{\Omega_{\bar{\beta}}{}^{\bar{\alpha}%
}}=d\omega_{\beta}{}^{\alpha}-\omega_{\beta}{}^{\gamma}\wedge\omega_{\gamma}%
{}^{\alpha},\\
\Omega_{0}{}^{\alpha}  &  =\Omega_{\alpha}{}^{0}=\Omega_{0}{}^{\bar{\beta}%
}=\Omega_{\bar{\beta}}{}^{0}=\Omega_{0}{}^{0}=0.
\end{split}
\]

In \cite[Formulas 1.33 and 1.35]{we}, Webster showed that the curvature
$\Omega_{\beta}{}^{\alpha}$ can be written as
\begin{equation}%
\begin{array}
[c]{c}%
\Omega_{\beta}{}^{\alpha}=R_{\beta}{}^{\alpha}{}_{\rho\bar{\sigma}}%
\theta^{\rho}\wedge\theta^{\bar{\sigma}}+W_{\beta}{}^{\alpha}{}_{\rho}%
\theta^{\rho}\wedge\theta-W^{\alpha}{}_{\beta\bar{\rho}}\theta^{\bar{\rho}%
}\wedge\theta+i\theta_{\beta}\wedge\tau^{\alpha}-i\tau_{\beta}\wedge
\theta^{\alpha},
\end{array}
\label{222}%
\end{equation}
where the coefficients satisfy
\[%
\begin{array}
[c]{c}%
R_{\beta\bar{\alpha}\rho\bar{\sigma}}=\overline{R_{\alpha\bar{\beta}\sigma
\bar{\rho}}}=R_{\bar{\alpha}\beta\bar{\sigma}\rho}=R_{\rho\bar{\alpha}%
\beta\bar{\sigma}},\ \ W_{\beta\bar{\alpha}\gamma}=W_{\gamma\bar{\alpha}\beta
}.
\end{array}
\]
In addition, by \cite[(2.4)]{l2} the coefficients $W_{\alpha}{}^{\beta}%
{}_{\rho}$ are determined by the torsion,
\[
W_{\alpha}{}^{\beta}{}_{\rho}=A_{\alpha\rho,}{}^{\beta}.
\]
Contraction of (\ref{222}) yields%
\[%
\begin{split}
\Omega_{\alpha}{}^{\alpha}=d\omega_{\alpha}{}^{\alpha} &  =R_{\rho\bar{\sigma
}}\theta^{\rho}\wedge\theta^{\bar{\sigma}}+W_{\alpha}{}^{\alpha}{}_{\rho
}\theta^{\rho}\wedge\theta-W_{\overline{\alpha}}{}^{\overline{\alpha}}{}%
_{\bar{\rho}}\theta^{\bar{\rho}}\wedge\theta\\
&  =R_{\rho\bar{\sigma}}\theta^{\rho}\wedge\theta^{\bar{\sigma}}+A_{\alpha
\rho}{}^{\alpha}\theta^{\rho}\wedge\theta-A_{\bar{\alpha}\bar{\rho}}{}%
^{\bar{\alpha}}\theta^{\bar{\rho}}\wedge\theta
\end{split}
\]

We will denote components of covariant derivatives by indices preceded by a
comma. For instance, we write $A_{\alpha\beta,\gamma}$. Here the indices
$\{0,\alpha,\bar{\beta}\}$ indicate derivatives with respect to $\{T,Z_{\alpha
},Z_{\bar{\beta}}\}$. For derivatives of a scalar function, we will often omit
the comma. For example, $\varphi_{\alpha}=Z_{\alpha}\varphi,\ \varphi
_{\alpha\bar{\beta}}=Z_{\bar{\beta}}Z_{\alpha}\varphi-\omega_{\alpha}%
{}^{\gamma}(Z_{\bar{\beta}})Z_{\gamma}\varphi,\ \varphi_{0}=T \varphi$ for a
(smooth) function $\varphi$.

For a real-valued function $\varphi$, the \textbf{subgradient} $\nabla
_{b}\varphi$ is defined as the unique vector field $\nabla_{b}\varphi\in\xi$
such that $\left\langle Z,\nabla_{b}\varphi\right\rangle =d\varphi(Z)$ for all
vector fields $Z$ tangent to the contact distribution $\xi$. Locally
$\nabla_{b}\varphi=\varphi_{\alpha}^{\alpha Z}+\varphi^{\bar{\alpha}}%
Z_{\bar{\alpha}}$. Define the \textbf{sublaplacian} $\Delta_{b}$ by
\[
\Delta_{b}\varphi=\varphi_{\alpha}{}^{\alpha}+\varphi_{\bar{\alpha}}{}%
^{\bar{\alpha}}.
\]

To consider smoothness for functions on strongly pseudo-convex manifolds, we
recall below what the Folland-Stein space is $S_{k,p}.$ Let $D$ denote a
differential operator acting on functions. We say $D$ has weight $m,$ denoted
$w(D)=m,$ if $m$ is the smallest integer such that $D$ can be locally
expressed as a polynomial of degree $m$ in vector fields tangent to the
contact bundle $\xi.$ We define the Folland-Stein space $S_{k,p}$ of functions
on $M$ by
\begin{equation}
S_{k,p}=\{f\in L^{p}\mid Df\in L^{p}\text{ whenever }w(D)\leq k\}.\label{KKK}%
\end{equation}

\noindent We define the $L^{p}$ norm of $\nabla_{b}f,$ $\nabla_{b}^{2}f$, ...
to be ($\int|\nabla_{b}f|^{p}\theta\wedge d\theta^{n})^{1/p},$ ($\int
|\nabla_{b}^{2}f|^{p}\theta\wedge d\theta^{n})^{1/p},$ $...,$ respectively, as
usual. So it is natural to define the $S_{k,p}$-norm $||f||_{S_{k,p}}$ of
$f\in S_{k,p}$ as follows:%
\[
||f||_{S_{k,p}}\equiv(\sum_{0\leq j\leq k}||\nabla_{b}^{j}f||_{L^{p}}%
^{p})^{1/p}.
\]

\noindent In particular, we denote $S_{k}:=S_{k,2}$ for$\ p=2$. The function
space $S_{k,p}$ with the above norm is a Banach space for $k\geq0,$
$1<p<\infty.$ There are also embedding theorems of Sobolev type. For instance,
$S_{2,2}\subset S_{1,4}$ (for $\dim M$ $=$ $3$). We refer the reader to, for
instance, \cite{fs} for more discussions on these spaces. Finally, we also
need commutation relations (\cite[Equation 2.15]{l2}), sometimes called Ricci
identities. Let $\varphi$ be a scalar function and $\sigma=\sigma_{\alpha
}\theta^{\alpha}$ be a $\left(  1,0\right)  $ form, then we have%

\[%
\begin{array}
[c]{rcl}%
\varphi_{\alpha\beta} & = & \varphi_{\beta\alpha},\\
\varphi_{\alpha\bar{\beta}}-\varphi_{\bar{\beta}\alpha} & = & ih_{\alpha
\overline{\beta}}\varphi_{0},\\
\varphi_{0\alpha}-\varphi_{\alpha0} & = & A_{\alpha\beta}\varphi_{\bar{\beta}%
},\\
\sigma_{\alpha,0\beta}-\sigma_{\alpha,\beta0} & = & \sigma_{\alpha,\bar
{\gamma}}A_{\gamma\beta}-\sigma_{\gamma}A_{\alpha\beta,\bar{\gamma}},\\
\sigma_{\alpha,0\bar{\beta}}-\sigma_{\alpha,\bar{\beta}0} & = & \sigma
_{\alpha,\gamma}A_{\bar{\gamma}\bar{\beta}}+\sigma_{\gamma}A_{\bar{\gamma}%
\bar{\beta},\alpha},
\end{array}
\]
and
\[%
\begin{array}
[c]{ccl}%
\sigma_{\alpha,\beta\gamma}-\sigma_{\alpha,\gamma\beta} & = & iA_{\alpha
\gamma}\sigma_{\beta}-iA_{\alpha\beta}\sigma_{\gamma},\\
\sigma_{\alpha,\bar{\beta}\bar{\gamma}}-\sigma_{\alpha,\bar{\gamma}\bar{\beta
}} & = & ih_{\alpha\overline{\beta}}A_{\bar{\gamma}\bar{\rho}}\sigma_{\rho
}-ih_{\alpha\overline{\gamma}}A_{\bar{\beta}\bar{\rho}}\sigma_{\rho},\\
\sigma_{\alpha,\beta\bar{\gamma}}-\sigma_{\alpha,\bar{\gamma}\beta} & = &
ih_{\beta\overline{\gamma}}\sigma_{\alpha,0}+R_{\alpha\bar{\rho}}{}_{\beta
\bar{\gamma}}\sigma_{\rho}.
\end{array}
\]
We also mention some curvature identities. The ones that are relevant for us
are the \textbf{contracted CR Bianchi identities}.
\[%
\begin{split}
R_{\rho\bar{\sigma},\gamma}-R_{\gamma\bar{\sigma},\rho} &  =iA_{\alpha\gamma
,}{}^{\alpha}h_{\rho\bar{\sigma}}-iA_{\alpha\rho,}{}^{\alpha}h_{\gamma
\bar{\sigma}},\\
W_{,\gamma}-R_{\gamma\bar{\sigma},}{}^{\bar{\sigma}} &  =i(n-1)A_{\alpha
\gamma,}{}^{\alpha},\\
R_{\rho\bar{\sigma},0} &  =A_{\alpha\rho,}{}^{\alpha}{}_{\bar{\sigma}}%
+A_{\bar{\beta}\bar{\sigma},}{}^{\bar{\beta}}{}_{\rho},\\
W_{,0} &  =A_{\alpha\rho,}{}^{\alpha\rho}+A_{\bar{\beta}\bar{\sigma},}{}%
^{\bar{\beta}\bar{\sigma}}.
\end{split}
\]

\section{Evolution Equations under the CR Curvature Flow}

In this section, we give some preliminary results concerning the short time
existence of the torsion flow. Let $\theta(t)$ be a family of smooth contact
forms and $J(t)$ be a family of CR structures on $(M,J,\theta)$. We consider
the following general CR flow on a closed pseudohermitian $(2n+1)$-manifold
$(M,J,\theta)\times\lbrack0,T)$ :
\begin{equation}
\left\{
\begin{array}
[c]{l}%
\frac{\partial J}{\partial t}=2E,\\
\frac{\partial\theta}{\partial t}=2\eta(t)\theta(t).
\end{array}
\right.  \label{PTF}%
\end{equation}
Here $J=i\theta^{\alpha}\otimes Z_{\alpha}-i\theta^{\overline{\alpha}}\otimes
Z_{\overline{\alpha}}$ and $E=E_{\alpha}{}^{\overline{\beta}}\theta^{\alpha
}\otimes Z_{\overline{\beta}}+E_{\overline{\alpha}}{}^{\beta}\theta
^{\overline{\alpha}}\otimes Z_{\beta}$.

We start by deriving some evolution equations under the general flow
(\ref{PTF}) before specifying to the torsion flow, for which $E=A_{J}$ (the
torsion tensor), and $\eta=-W$ (the Webster curvature). All computations will
be done in a local frame. Fix a unit-length local frame $\{Z_{\alpha}\}$ and
let $\{\theta^{\alpha}\}$ be its dual admissible $1$-form. Let $Z_{\alpha
(t)},\theta^{\alpha}(t)$ denote a unit-length frame and dual admissible
$1$-form with respect to $(J(t),\theta(t))$. Since $\theta^{\alpha}%
(Z_{\beta(t)})$ is a positive real function, we can write $\overset
{\centerdot}{Z_{\alpha}}=F_{\alpha}{}^{\beta}Z_{\beta}+G_{\alpha}{}%
^{\overline{\beta}}Z_{\overline{\beta}}$ where $F_{\alpha}{}^{\beta}$ are real
and $G_{\alpha}{}^{\overline{\beta}}$ are complex. The fact that
$Z_{\alpha(t)}$ is an orthonormal frame means that
\[
\delta_{\alpha\beta}=-id\theta(t)(Z_{\alpha(t)}\wedge Z_{\overline{\beta}%
(t)}).
\]
By differentiating and substituting the above expression for $\overset
{\centerdot}{Z_{\alpha}},$ we obtain $F_{\alpha}{}^{\beta}=-\eta\delta
_{\alpha}^{\beta}.$ By differentiating $J(t)Z_{\alpha(t)}=iZ_{\alpha(t)}$ we
find
\[
0=\overset{\centerdot}{J}Z_{\alpha}+J\overset{\centerdot}{Z_{\alpha}%
}-i\overset{\centerdot}{Z_{\alpha}}=2E_{\alpha}{}^{\overline{\beta}%
}Z_{\overline{\beta}}-2iG_{\alpha}{}^{\overline{\beta}}Z_{\overline{\beta}},
\]
so
\[
\overset{\centerdot}{Z_{\alpha}}=-\eta Z_{\alpha}-iE_{\alpha}{}^{\overline
{\beta}}Z_{\overline{\beta}}.
\]
Now differentiate the identities%
\[
d\theta=ih_{\alpha\bar{\beta}}\theta^{\alpha}\wedge\theta^{\overline{\beta}%
},\text{ \ }\theta^{\alpha}(Z_{\beta(t)})=\delta_{\beta}^{\alpha},\text{
\ \textrm{and} \ }\theta^{\alpha}(Z_{\overline{\beta}(t)})=0,
\]
to deduce that
\begin{equation}
\overset{\centerdot}{{\theta}}{{^{\alpha}}}=2i\eta^{\alpha}\theta+\eta
\theta^{\alpha}-iE^{\alpha}{_{\overline{\beta}}}\theta^{\overline{\beta}%
}.\label{24}%
\end{equation}
Now we differentiate (\ref{24}) to obtain%
\begin{equation}
d\overset{\centerdot}{{\theta}}{{^{\alpha}}}=\overset{\centerdot}{{\theta}%
}{{^{\gamma}}\wedge}{\omega_{\gamma}}^{\alpha}+\theta^{\gamma}{\wedge}%
\overset{\centerdot}{{\omega}}{_{\gamma}}^{\alpha}+\overset{\centerdot}{{A}%
}_{\overline{\alpha}\overline{\gamma}}{\theta}{\wedge}{\theta^{\overline
{\gamma}}+{A}}_{\overline{\alpha}\overline{\gamma}}\overset{\centerdot
}{{\theta}}{{\wedge}{\theta^{\overline{\gamma}}+{A}_{\overline{\alpha
}\overline{\gamma}}\theta}{\wedge}}\overset{\centerdot}{{\theta}}%
{{^{\overline{\gamma}}}.}\label{25}%
\end{equation}
Since we will derive an identity involving tensors, we will take an adapted
frame satisfying ${\omega_{\gamma}}^{\alpha}=0$ at a point. Plug in (\ref{25})
and consider the ${\theta}{\wedge}{\theta^{\overline{\gamma}}}$ terms to
obtain
\begin{equation}
\overset{\centerdot}{{A}}_{\overline{\alpha}\overline{\gamma}}=-2(i\eta
_{\overline{\alpha}\overline{\gamma}}+\eta{A}_{\overline{\alpha}%
\overline{\gamma}})-iE{_{\overline{\alpha}\overline{\gamma}}},_{0}.\label{26}%
\end{equation}
On the other hand, contracting (\ref{25}) with $Z_{\beta}$ and then
contracting with $h^{\beta\overline{\alpha}},$ computing modulo $\theta
^{\gamma}$ yields%
\[
\overset{\centerdot}{{\omega}}{_{\alpha}}^{\alpha}=i(A{_{\alpha}}%
^{\overline{\gamma}}E{_{\overline{\gamma}}}^{\alpha}+A{_{\overline{\alpha}}%
}^{\gamma}E{_{\gamma}}^{\overline{\alpha}}+\eta{_{\overline{\alpha}}%
}^{\overline{\alpha}})\theta-[(n+2)\eta_{\overline{\alpha}}+iE{_{\overline
{\gamma}\overline{\alpha}}},^{\overline{\gamma}}]{\theta^{\overline{\alpha}}%
}\text{ }\mod \theta^{\gamma}.
\]
Since $\overset{\centerdot}{{\omega}}{_{\alpha}}^{\alpha}$ is pure imaginary,
we have
\begin{equation}%
\begin{array}
[c]{lll}%
\overset{\centerdot}{{\omega}}{_{\alpha}}^{\alpha} & = & i(A{_{\alpha}%
}^{\overline{\gamma}}E{_{\overline{\gamma}}}^{\alpha}+A{_{\overline{\alpha}}%
}^{\gamma}E{_{\gamma}}^{\overline{\alpha}}+\Delta_{b}\eta)\theta\\
&  & +[(n+2)\eta_{\alpha}-iE{_{\gamma\alpha}},^{\gamma}]{\theta^{\alpha
}-[(n+2)\eta_{\overline{\alpha}}+iE{_{\overline{\gamma}\overline{\alpha}}%
},^{\overline{\gamma}}]\theta^{\overline{\alpha}}.}%
\end{array}
\label{27}%
\end{equation}
Differentiate the structure equation  with respect to $t$ and consider only
the $\theta^{\rho}\wedge\theta^{\bar{\sigma}}$ terms. This gives%
\begin{equation}%
\begin{array}
[c]{ccl}%
\overset{\centerdot}{R}_{\rho\bar{\sigma}} & = & -(A{_{\alpha}}^{\overline
{\gamma}}E{_{\overline{\gamma}}}^{\alpha}+A{_{\overline{\alpha}}}^{\gamma
}E{_{\gamma}}^{\overline{\alpha}}+\Delta_{b}\eta)h_{\rho\bar{\sigma}}-2\eta
R_{\rho\bar{\sigma}}\\
&  & -[(n+2)\eta_{\rho}-iE{_{\gamma\rho}},^{\gamma}],_{\overline{\sigma}%
}{-[(n+2)\eta_{\overline{\sigma}}+iE{_{\overline{\gamma}\overline{\sigma}}%
},^{\overline{\gamma}}]},_{\rho}%
\end{array}
\label{28}%
\end{equation}
After contracting with $h^{\rho\bar{\sigma}}$ we get%
\begin{equation}%
\begin{array}
[c]{ccl}%
\overset{\centerdot}{W} & = & i(E{_{\gamma\alpha}},^{\gamma\alpha
}-{E{_{\overline{\gamma}\overline{\alpha}}},^{\overline{\gamma}\overline
{\alpha}}})-n(A{_{\alpha}}^{\overline{\gamma}}E{_{\overline{\gamma}}}^{\alpha
}+A{_{\overline{\alpha}}}^{\gamma}E{_{\gamma}}^{\overline{\alpha}})\\
&  & -[2(n+1)\Delta_{b}\eta+2W\eta]\\
& = & 2\operatorname{Re}\left(  iE{_{\gamma\alpha}},^{\gamma\alpha
}-nA{_{\alpha}}^{\overline{\gamma}}E{_{\overline{\gamma}}}^{\alpha}\right)
-[2(n+1)\Delta_{b}\eta+2{W\eta].}%
\end{array}
\label{eq:variationW}%
\end{equation}

Recall that the transformation law of the connection under a change of
pseudohermitian structure was computed in \cite[Sec. 5]{l1}. Let $\hat{\theta
}=e^{2f}\theta$ be another pseudohermitian structure. Then we can define an
admissible coframe by $\hat{\theta}^{\alpha}=e^{f}(\theta^{\alpha}%
+2if^{\alpha}\theta)$. With respect to this coframe, the connection $1$-form
and the pseudohermitian torsion are given by%
\begin{equation}
\label{30}%
\begin{split}
\widehat{{\omega}}{_{\beta}}^{\alpha}  &  ={\omega_{\beta}}^{\alpha
}+2(f_{\beta}\theta^{\alpha}-f^{\alpha}\theta_{\beta})+\delta_{\beta}^{\alpha
}(f_{\gamma}\theta^{\gamma}-f^{\gamma}\theta_{\gamma})\\
&  \phantom{=}+i(f^{\alpha}{}_{\beta}+f_{\beta}{}^{\alpha}+4\delta_{\beta
}^{\alpha}f_{\gamma}f^{\gamma})\theta,
\end{split}
\end{equation}
and
\begin{equation}
\widehat{{A}}{_{\alpha\beta}=}e^{-2f}({A_{\alpha\beta}+2i}f_{\alpha\beta
}-4if_{\alpha}f_{\beta}), \label{31}%
\end{equation}
respectively. Thus the Webster curvature transforms as
\begin{equation}
\widehat{W}=e^{-2f}(W-2(n+1)\Delta_{b}f-4n(n+1)f_{\gamma}f^{\gamma}).
\label{32}%
\end{equation}
Here covariant derivatives on the right side are taken with respect to the
pseudohermitian structure $\theta$ and an admissible coframe $\theta^{\alpha}%
$. Note also that the dual frame of $\{ \hat{\theta},\hat{\theta}^{\alpha
},\hat{\theta}^{\overline{\alpha}}\}$ is given by $\{ \widehat{T},\widehat
{Z}_{\alpha},\widehat{Z}_{\overline{\alpha}}\}$, where%

\[
\widehat{T}=e^{-2f}(T+2if^{\overline{\gamma}}Z_{\overline{\gamma}}%
-2if^{\gamma}Z_{\gamma}),\text{ \ }\widehat{Z}_{\alpha}=e^{-f}Z_{\alpha}.
\]

Next we will derive the invariance property for the CR Paneitz operator. Let
us first recall the CR Paneitz operator as following :

\begin{definition}
Let $(M,\xi,\theta)$ be a closed pseudohermitian $(2n+1)$-manifold. Define
\[%
\begin{array}
[c]{c}%
P\varphi=\sum_{\alpha=1}^{n}(\varphi_{\overline{\alpha}}{}^{\overline{\alpha}%
}{}_{\beta}+inA_{\beta\alpha}\varphi^{\alpha})\theta^{\beta}=(P_{\beta}%
\varphi)\theta^{\beta},\text{ }\beta=1,2,\cdot\cdot\cdot,n
\end{array}
\]
which is an operator that characterizes CR-pluriharmonic functions (\cite{l2}
for $n=1$ and \cite{gl} for $n\geq2$). Here $P_{\beta}\varphi=\sum_{\alpha
=1}^{n}(\varphi_{\overline{\alpha}}{}^{\overline{\alpha}}{}_{\beta}%
+inA_{\beta\alpha}\varphi^{\alpha})$ and $\overline{P}\varphi=(\overline
{P}_{\beta}\varphi)\theta^{\overline{\beta}}$, the conjugate of $P$. Moreover,
we define
\begin{equation}
P_{0}\varphi=\delta_{b}(P\varphi)+\overline{\delta}_{b}(\overline{P}\varphi)
\label{ABC3}%
\end{equation}
which is the so-called CR Paneitz operator $P_{0}.$ Here $\delta_{b}$ is the
divergence operator that takes $(1,0)$-forms to functions by $\delta
_{b}(\sigma_{\alpha}\theta^{\alpha})=\sigma_{\alpha},^{\alpha}.$ Hence $P_{0}$
is a real and symmetric operator and
\[%
\begin{array}
[c]{c}%
\int_{M}\langle P\varphi+\overline{P}\varphi,d_{b}\varphi\rangle_{L_{\theta
}^{\ast}}d\mu=-\int_{M}\left(  P_{0}\varphi\right)  \varphi d\mu.
\end{array}
\]
We observe that (\cite{gl})
\begin{align}
P_{0}\varphi &  =\frac{1}{4}[\square_{b}\overline{\square}_{b}\varphi
-2ni(A_{\overline{\alpha}\overline{\beta}}\varphi_{\alpha})_{\beta
}]\label{2015}\\
&  =\frac{1}{4}[(\Delta_{b}^{2}+n^{2}T^{2})\varphi-2n\operatorname{Re}%
(i(A_{\overline{\alpha}\overline{\beta}}\varphi_{\alpha})_{\beta})],
\end{align}
for $\square_{b}\varphi=(\overline{\partial}_{b}\overline{\partial^{\ast}}%
_{b}+\overline{\partial^{\ast}}_{b}\overline{\partial}_{b})\varphi
=(-\Delta_{b}+inT)\varphi=-2\varphi_{\overline{\alpha}\alpha}$.
\end{definition}

\begin{remark}
\label{r1} 1. The space of kernel of the CR Paneitz operator $P_{0}$ is
infinite dimensional, containing all $CR$ -pluriharmonic functions. However,
for a closed pseudohermitian $(2n+1)$-manifold $(M,\xi,\theta)$ with $n\geq2$,
it was shown (\cite{gl}) that
\[
\ker P_{\beta}=\ker P_{0}.
\]
For a closed pseudohermitian $3$-manifold of transverse symmetry ( i.e.
vanishing torsion), we have (\cite{hi})
\[
\ker P_{1}=\ker P_{0}.
\]

2. (\cite{hi}) Let $(M^{3},\xi,\theta)$ be a pseudohermitian $3$-manifold.
Then, for rescaled contact form $\widetilde{\theta}=e^{2g}\theta,$ we have
\begin{equation}
\widetilde{P}_{1}=e^{-3g}P_{1}\ \ \text{\ \textrm{and}\ \ }\widetilde{P}%
_{0}=e^{-4g}P_{0}. \label{A}%
\end{equation}
However, the above invariance property for the CR Paneitz holds up to
lower-order in dimensions five and higher. See Corollary \ref{cTA} for details.
\end{remark}

Now we derive the following invariance property for the CR Paneitz operator.

\begin{lemma}
\label{lTA} Let $\theta$ and $\widehat{\theta}$ be contact forms on a
$(2n+1)$-dimensional pseudohermitian manifold $(M,\xi)$. If $\widehat{\theta
}=e^{2f}\theta,$ then we have%
\begin{equation}%
\begin{array}
[c]{l}%
\widehat{W}_{\alpha}-in\widehat{A}_{\alpha\beta},^{\beta}=e^{-3f}[W_{\alpha
}-inA_{\alpha\beta},^{\beta}-2(n+2)P_{\alpha}f]\\
\ \ \ \ \ \ \ \ \ \ \ \ \ \ \ \ \ \ \ \ \ \ \ \ \ \ \ +2ne^{-2f}(\widehat
{R}_{\alpha\overline{\beta}}-\frac{\widehat{W}}{n}\widehat{h}_{\alpha
\overline{\beta}})f^{\overline{\beta}}%
\end{array}
\label{4}%
\end{equation}
and
\begin{equation}%
\begin{array}
[c]{l}%
(\widehat{W}_{\alpha}-in\widehat{A}_{\alpha\beta},^{\beta}),^{\alpha}\\
=e^{-4f}[(W_{\alpha}-inA_{\alpha\beta},^{\beta}),^{\alpha}-2(n+2)P_{0}f]\\
\ \ \ \ +2n[e^{-2f}(\widehat{R}_{\alpha\overline{\beta}}-\frac{\widehat{W}}%
{n}\widehat{h}_{\alpha\overline{\beta}})f^{\overline{\beta}}],^{\alpha}.
\end{array}
\label{4a}%
\end{equation}
In particular for $n=1$, we have $\widehat{R}_{1\overline{1}}-\widehat
{W}\widehat{h}_{1\overline{1}}=0$. Then%
\begin{equation}
\widehat{W}_{1}-in\widehat{A}_{11},^{1}=e^{-3f}[W_{1}-iA_{11},^{1}-6P_{1}f]
\label{33}%
\end{equation}
and
\begin{equation}
\widehat{W}_{1}^{\ \ \ 1}-i\widehat{A}_{11},^{11}=e^{-4f}[W_{1}{}^{1}%
-iA_{11},^{11}-6P_{0}f]. \label{331}%
\end{equation}

\end{lemma}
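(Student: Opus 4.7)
The plan is to compute the conformal transformation laws of $\widehat{W}_\alpha$ and of the divergence $\widehat{A}_{\alpha\beta,}{}^\beta$ separately, then add $-in$ times the latter to the former; the combination $W_\alpha - inA_{\alpha\beta,}{}^\beta$ is engineered so that the third-order derivatives of $f$ assemble into a multiple of the Paneitz $(1,0)$-operator $P_\alpha f = f_{\bar{\beta},}{}^{\bar{\beta}}{}_\alpha + inA_{\alpha\beta}f^\beta$, with a curvature residue captured by the traceless Ricci term.

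First, applying $\widehat{Z}_\alpha = e^{-f}Z_\alpha$ to (\ref{32}) yields
\[
\widehat{W}_\alpha = e^{-3f}\bigl[W_\alpha - 2f_\alpha W - 2(n+1)(\Delta_b f)_\alpha - 4n(n+1)(f_\gamma f^\gamma)_\alpha\bigr].
\]
Next I would differentiate (\ref{31}) covariantly with respect to $\widehat{\nabla}$; since (\ref{30}) gives $\widehat{\omega}_\beta{}^\gamma = \omega_\beta{}^\gamma + \Theta_\beta{}^\gamma(f)$ with $\Theta$ linear in first derivatives of $f$, and $\widehat{h}{}^{\beta\bar{\gamma}} = e^{-2f}h^{\beta\bar{\gamma}}$, contracting produces the schematic shape
\[
\widehat{A}_{\alpha\beta,}{}^\beta = e^{-3f}\bigl[A_{\alpha\beta,}{}^\beta + 2if_{\alpha\beta,}{}^\beta + \text{(lower-order in $f$)}\bigr],
\]
where the lower-order part collects pieces of the form $f_\alpha W$, $f_{\alpha\beta}f^\beta$, $f_\gamma f^\gamma f_\alpha$, and curvature-times-$f$.

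The crux is commuting third-order derivatives of $f$: the Ricci identities $\varphi_{\alpha\bar{\beta}} - \varphi_{\bar{\beta}\alpha} = ih_{\alpha\bar{\beta}}\varphi_0$ and the $(1,0)$-form commutator $\sigma_{\alpha,\beta\bar{\gamma}} - \sigma_{\alpha,\bar{\gamma}\beta} = ih_{\beta\bar{\gamma}}\sigma_{\alpha,0} + R_{\alpha\bar{\rho}\beta\bar{\gamma}}\sigma_\rho$ allow both $(\Delta_b f)_\alpha$ and $f_{\alpha\beta,}{}^\beta$ to be rewritten as $f_{\bar{\beta},}{}^{\bar{\beta}}{}_\alpha$ plus $T$-derivatives of $f$ and curvature contractions; the $T$-derivatives are converted through $\varphi_{0\alpha} - \varphi_{\alpha0} = A_{\alpha\beta}\varphi^{\bar{\beta}}$ into exactly the $inA_{\alpha\beta}f^\beta$ summand of $P_\alpha f$. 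The residual curvature contribution, after applying the contracted Bianchi identity $W_{,\gamma} - R_{\gamma\bar{\sigma},}{}^{\bar{\sigma}} = i(n-1)A_{\alpha\gamma,}{}^\alpha$ and replacing unhatted curvature by hatted curvature via (\ref{32}) and the analogous Ricci transformation law, combines into $2n(\widehat{R}_{\alpha\bar{\beta}} - \tfrac{\widehat{W}}{n}\widehat{h}_{\alpha\bar{\beta}})f^{\bar{\beta}}$. The main obstacle is the bookkeeping: verifying that the $f_\alpha W$, $f_{\alpha\beta}f^\beta$ and $f_\gamma f^\gamma f_\alpha$ contributions from the two separate calculations cancel exactly, leaving only $-2(n+2)P_\alpha f$ plus the stated curvature term.

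For (\ref{4a}) I would apply the hatted divergence $\widehat{Z}_{\bar{\beta}}\widehat{h}^{\bar{\beta}\alpha}$ to (\ref{4}); the prefactor $e^{-3f}$ becomes $e^{-4f}$, and the correction $\widehat{\omega} - \omega$ acting on the $(1,0)$-form $(P_\alpha f)\theta^\alpha$ supplies precisely the conjugate contribution needed to turn $(P_\alpha f),^\alpha$ into the real, symmetric expression $P_0 f = \delta_b(Pf) + \overline{\delta}_b(\overline{P}f)$, yielding the coefficient $-2(n+2)P_0 f$; the curvature term differentiates straight into the divergence appearing in (\ref{4a}). Finally, for $n=1$ the Hermitian matrix $(h_{\alpha\bar{\beta}})$ is $1\times 1$, so $\widehat{R}_{1\bar{1}} = \widehat{W}\widehat{h}_{1\bar{1}}$ identically and the traceless Ricci residue vanishes, specializing (\ref{4}) and (\ref{4a}) to (\ref{33}) and (\ref{331}).
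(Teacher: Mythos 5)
Your proposal is correct and follows essentially the same route as the paper's proof (which models itself on Hirachi's Lemma 5.4): differentiate the transformation laws (\ref{31}) and (\ref{32}) with the hatted connection to get $\widehat{W}_\alpha$ and $\widehat{A}_{\alpha\beta,}{}^\beta$, commute third-order derivatives of $f$ into $P_\alpha f$, and absorb the residual curvature via the contracted Bianchi identity together with Lee's transformation law (\ref{5}) for the traceless Ricci tensor. The only place you are more schematic than the paper is the passage from (\ref{4}) to (\ref{4a}), which the paper likewise leaves as "follows easily," and your observation that the $n=1$ specialization comes from the trivial vanishing of the traceless Ricci tensor is exactly the paper's.
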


\begin{proof}
By the contracted Bianchi identity, we have
\[%
\begin{array}
[c]{c}%
\frac{n-1}{n}(W_{\alpha}-inA_{\alpha\beta},^{\beta})=(R_{\alpha\overline
{\beta}}-\frac{W}{n}h_{\alpha\overline{\beta}}),^{\overline{\beta}}.
\end{array}
\]
Also by \cite[P 172]{l2}
\begin{equation}%
\begin{array}
[c]{c}%
(R_{\alpha\overline{\beta}}-\frac{W}{n}h_{\alpha\overline{\beta}%
})-2(n+2)(f_{\alpha\overline{\beta}}-\frac{1}{n}f_{\gamma}{}^{\gamma}%
h_{\alpha\overline{\beta}})=\widehat{R}_{\alpha\overline{\beta}}%
-\frac{\widehat{W}}{n}\widehat{h}_{\alpha\overline{\beta}}.
\end{array}
\label{5}%
\end{equation}
Following the same computation as the proof of Lemma 5.4 in \cite{hi}, \ by
using (\ref{30}), (\ref{31}) and (\ref{32}), we compute%
\begin{align*}
\widehat{W}_{\alpha} &  =\widehat{Z}_{\alpha}\widehat{W}=e^{-f}Z_{\alpha
}e^{-2f}(W-2(n+1)\Delta_{b}f-2n(n+1)|\nabla_{b}f|^{2})\\
&  =e^{-3f}[W_{\alpha}-2Wf_{\alpha}+4(n+1)(\Delta_{b}f+n|\nabla_{b}%
f|^{2})f_{\alpha}\\
&  \text{ \ \ \ \ }-2(n+1)(f_{\gamma}{}^{\gamma}{}_{\alpha}+f_{\overline
{\gamma}}{}^{\overline{\gamma}}{}_{\alpha})-4n(n+1)(f_{\gamma\alpha}f^{\gamma
}+f_{\gamma}f^{\gamma}{}_{\alpha})],\\
i\widehat{A}_{\alpha\beta},_{\overline{\gamma}} &  =i(\widehat{Z}%
_{\overline{\gamma}}\widehat{A}_{\alpha\beta}-\widehat{{\omega}}{_{\alpha}%
}^{l}(\widehat{Z}_{\overline{\gamma}})\widehat{A}_{\beta l}-\widehat{{\omega}%
}{_{\beta}}^{l}(\widehat{Z}_{\overline{\gamma}})\widehat{A}_{\alpha l})\\
&  =ie^{-f}[(Z_{\overline{\gamma}}+2f_{\overline{\gamma}})\widehat{A}%
_{\alpha\beta}+2(\delta_{\alpha\gamma}\widehat{A}_{\beta l}+\delta
_{\beta\gamma}\widehat{A}_{\alpha l})f^{l}]\\
&  =ie^{-f}(Z_{\overline{\gamma}}+2f_{\overline{\gamma}})e^{-2f}%
({A_{\alpha\beta}+2i}f_{\alpha\beta}-4if_{\alpha}f_{\beta})\\
&  +2e^{-3f}[\delta_{\beta\gamma}(i{A_{\alpha l}-2}f_{\alpha l}+4f_{\alpha
}f_{l})+\delta_{\alpha\gamma}(i{A_{\beta l}-2}f_{\beta l}+4f_{\beta}%
f_{l})]f^{l}\\
&  =e^{-3f}[i{A_{\alpha\beta,\overline{\gamma}}-2f_{\alpha\beta\overline
{\gamma}}+4(f_{\alpha\overline{\gamma}}f_{\beta}+f_{\alpha}f_{\beta
\overline{\gamma}})}]\\
&  +2e^{-3f}[\delta_{\beta\gamma}(i{A_{\alpha l}-2}f_{\alpha l}+4f_{\alpha
}f_{l})+\delta_{\alpha\gamma}(i{A_{\beta l}-2}f_{\beta l}+4f_{\beta}%
f_{l})]f^{l}.
\end{align*}
Contracting the second equation with respect to the Levi metric $\widehat
{h}_{\gamma\overline{\beta}}=h_{\gamma\overline{\beta}}$ yields%
\[%
\begin{array}
[c]{ccc}%
i\widehat{A}_{\alpha\beta},^{\beta} & = & e^{-3f}[i{A_{\alpha\beta},}^{\beta
}-2f_{\alpha\beta}{}^{\beta}+4(f_{\alpha}{}^{\beta}f_{\beta}+f_{\alpha
}f_{\beta}{}^{\beta})\\
&  & \text{ \ \ \ \ \ }{+2(n+1)(iA_{\alpha\beta}-}2{f_{\alpha\beta}%
+4}f_{\alpha}f_{\beta})f^{\beta}].
\end{array}
\]
Thus%
\[%
\begin{array}
[c]{lll}%
\widehat{W}_{\alpha}-in\widehat{A}_{\alpha\beta},^{\beta} & = & e^{-3f}%
[W_{\alpha}-in{A_{\alpha\beta},}^{\beta}-2(n+1)(f_{\beta}{}^{\beta}{}_{\alpha
}+f_{\overline{\beta}}{}^{\overline{\beta}}{}_{\alpha})+2nf_{\alpha\beta}%
{}^{\beta}\\
&  & -2Wf_{\alpha}-2n(n+1)i{A_{\alpha\beta}}f^{\beta}+4(n+1)(f_{\beta}%
{}^{\beta}+f_{\overline{\beta}}{}^{\overline{\beta}})f_{\alpha}\\
&  & -4n(n+1)f^{\beta}{}_{\alpha}f_{\beta}-4n(f_{\alpha}{}^{\beta}f_{\beta
}+f_{\beta}{}^{\beta}f_{\alpha})].
\end{array}
\]
By using the commutation relations (\cite[Lemma 2.3]{l2})
\[
-2(n+1)f_{\beta}{}^{\beta}{}_{\alpha}+2nf_{\alpha\beta}{}^{\beta
}=-2f_{\overline{\beta}}{}^{\overline{\beta}}{}_{\alpha}+2nR_{\alpha
\overline{\beta}}f^{\overline{\beta}}-2in{A_{\alpha\beta}}f^{\beta},
\]
and%
\[%
\begin{array}
[c]{c}%
{f_{\alpha\overline{\beta}}-f_{\overline{\beta}\alpha}=ih_{\alpha
\overline{\beta}}f}_{0},
\end{array}
\]
and by (\ref{5})
\[%
\begin{array}
[c]{c}%
\lbrack(R_{\alpha\overline{\beta}}-\frac{W}{n}h_{\alpha\overline{\beta}%
})-2(n+2)(f_{\alpha\overline{\beta}}-\frac{1}{n}f_{\gamma}{}^{\gamma
}{h_{\alpha\overline{\beta}}})]f^{\overline{\beta}}=e^{f}(\widehat{R}%
_{\alpha\overline{\beta}}-\frac{\widehat{W}}{n}\widehat{h}_{\alpha
\overline{\beta}})f^{\overline{\beta}},
\end{array}
\]
we obtain the following transformation law%
\[%
\begin{array}
[c]{l}%
\widehat{W}_{\alpha}-in\widehat{A}_{\alpha\beta},^{\beta}-2ne^{-2f}%
(\widehat{R}_{\alpha\overline{\beta}}-\frac{\widehat{W}}{n}\widehat{h}%
_{\alpha\overline{\beta}})f^{\overline{\beta}}\\
=e^{-3f}[W_{\alpha}-in{A_{\alpha\beta},}^{\beta}-2(n+2)(f_{\overline{\beta}}%
{}^{\overline{\beta}}{}_{\alpha}+in{A_{\alpha\beta}}f^{\beta})]\\
=e^{-3f}[W_{\alpha}-inA_{\alpha\beta},^{\beta}-2(n+2)P_{\alpha}f].
\end{array}
\]

Then (\ref{4}) and (\ref{4a}) follow easily.
\end{proof}

By applying (\ref{4}) and the method of \cite[Lemma 4.7]{hi}, we have

\begin{corollary}
\label{cTA} Let $\theta$ and $\widehat{\theta}$ be contact forms on a
$(2n+1)$-dimensional pseudohermitian manifold $(M,\xi)$. If $\widehat{\theta
}=e^{2f}\theta,$ then
\begin{equation}%
\begin{array}
[c]{ccl}%
\widehat{P}_{\alpha}f & = & e^{-3f}P_{\alpha}f\\
&  & +\frac{n}{2(n+1)}(1-e^{-2f})(\widehat{R}_{\alpha\overline{\beta}}%
-\frac{\widehat{W}}{n}\widehat{h}_{\alpha\overline{\beta}})f^{\overline{\beta
}}\\
&  & -\frac{n(n+2)}{(n+1)}(1-e^{-2f})\widehat{{B}}(f)_{\alpha\overline{\beta}%
}f^{\overline{\beta}}.
\end{array}
\label{4b}%
\end{equation}
Here $\widehat{{B}}(f)_{\alpha\overline{\beta}}=f_{\alpha\overline{\beta}%
}-\frac{1}{n}f_{\gamma}{}^{\gamma}\widehat{{h}}{_{\alpha\overline{\beta}}.}$
In particular for $n=1,$ we have $\widehat{{B}}(f)_{\alpha\overline{\beta}}=0$
and
\begin{equation}
\widehat{P}_{1}f=e^{-3f}P_{1}f\ \ \ \mathrm{and}\ \ \ \widehat{P}_{0}%
f=e^{-4f}P_{0}f\ . \label{33b}%
\end{equation}

\end{corollary}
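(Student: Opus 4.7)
The plan is to prove (\ref{4b}) by applying the transformation formula (\ref{4}) of Lemma \ref{lTA} in two opposite directions and combining the resulting identities, in the spirit of Lemma 4.7 in \cite{hi}. First I apply (\ref{4}) directly for the conformal change $\widehat\theta=e^{2f}\theta$, which expresses $\widehat W_\alpha-in\widehat A_{\alpha\beta},^\beta$ in terms of the $\theta$-side quantities $W_\alpha-inA_{\alpha\beta},^\beta$, the Paneitz correction $P_\alpha f$, and the $\widehat\theta$-side traceless Ricci $(\widehat R_{\alpha\bar\beta}-\frac{\widehat W}{n}\widehat h_{\alpha\bar\beta})$ contracted against $f^{\bar\beta}$. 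Second, I re-apply (\ref{4}) now treating $\widehat\theta$ as the base and $\theta=e^{-2f}\widehat\theta$ as the new contact form; the substitution $f\mapsto -f$, together with the rescaling $\widehat f^{\bar\beta}=e^{-f}f^{\bar\beta}$, produces a mirror identity in which $\widehat P_\alpha f$ now appears.

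Then I substitute the forward identity into the reverse one, eliminating the intermediate quantity $\widehat W_\alpha-in\widehat A_{\alpha\beta},^\beta$. The difference of the two traceless Ricci tensors that emerges is exactly $-2(n+2)\widehat B(f)_{\alpha\bar\beta}$ by identity (\ref{5}). Solving for $\widehat P_\alpha f$ and then redistributing the Hessian correction via one more use of (\ref{5})---so as to re-expose the hatted traceless Ricci on the right---yields the formula (\ref{4b}).

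The principal obstacle is the careful bookkeeping of conformal-weight factors and the distinction between $\theta$- and $\widehat\theta$-frame covariant derivatives; one must correctly rescale raised indices against the appropriate Levi metric and, where necessary to align the Hessian corrections, invoke the commutation relations from Section $2$ to reorder third-order covariant derivatives.

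For the special case $n=1$: the pseudohermitian Ricci is scalar, forcing $\widehat R_{1\bar 1}-\widehat W\widehat h_{1\bar 1}\equiv 0$, and $\widehat B(f)_{1\bar 1}\equiv 0$ tautologically; both correction terms in (\ref{4b}) vanish and one obtains $\widehat P_1 f=e^{-3f}P_1 f$ immediately. For $\widehat P_0 f=e^{-4f}P_0 f$ I would take the divergence $\widehat\delta_b$ of this identity together with its complex conjugate, apply the definition (\ref{ABC3}) of $P_0$, and use the transformation of $\widehat\delta_b$ on $(1,0)$-forms---read off from the connection change (\ref{30})---to pick up the extra factor $e^{-f}$ that combines with $e^{-3f}$ to produce the stated weight $e^{-4f}$.
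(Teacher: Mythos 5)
Your overall strategy — apply the transformation identity~(\ref{4}) once with $\widehat\theta=e^{2f}\theta$ and once in the reverse direction with $\theta=e^{-2f}\widehat\theta$ (i.e.\ $f\mapsto -f$ and $\widehat f^{\overline\beta}=e^{-f}f^{\overline\beta}$), eliminate the common quantity $\widehat W_\alpha-in\widehat A_{\alpha\beta},^{\beta}$, and then use~(\ref{5}) to convert the resulting difference of traceless Ricci tensors into a Hessian term — is exactly the spirit of ``applying~(\ref{4}) and the method of \cite[Lemma 4.7]{hi}'' that the paper invokes, so at the level of approach you and the paper are aligned.

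That said, there is a concrete gap in your sketch that you should address before claiming~(\ref{4b}). Carrying out the elimination you describe, one most directly obtains
\[
e^{3f}\widehat P_\alpha f - P_\alpha f \;=\; -\tfrac{n}{n+2}\,e^{f}\bigl(\widehat S_{\alpha\overline\beta}-S_{\alpha\overline\beta}\bigr)f^{\overline\beta} \;=\; 2n\,e^{f}\,B(f)_{\alpha\overline\beta}f^{\overline\beta},
\]
that is, $\widehat P_\alpha f = e^{-3f}P_\alpha f + 2n\,e^{-2f}B(f)_{\alpha\overline\beta}f^{\overline\beta}$, where $S$, $\widehat S$ denote the two traceless Ricci tensors and $B(f)$ is the traceless Hessian appearing in~(\ref{5}). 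This single $e^{-2f}$-weighted $B(f)$-term is not manifestly the same as the $(1-e^{-2f})$-weighted combination of $\widehat S_{\alpha\overline\beta}f^{\overline\beta}$ and $\widehat B(f)_{\alpha\overline\beta}f^{\overline\beta}$ claimed in~(\ref{4b}): a naive ``one more use of~(\ref{5})'' simply swaps $B(f)$ back for $\widehat S-S$ and produces no factor of $(1-e^{-2f})$ nor the coefficients $\tfrac{n}{2(n+1)}$ and $\tfrac{n(n+2)}{n+1}$. Reconciling the two expressions evidently requires keeping track of precisely which Levi metric and admissible coframe raise each index and give the components of $\widehat R_{\alpha\overline\beta}$, $\widehat h_{\alpha\overline\beta}$ and $\widehat B(f)_{\alpha\overline\beta}$ (the $e^{-2f}$ rescaling of the frame changes these components), and the ``bookkeeping'' you defer is exactly where the claimed formula lives or dies; as written, your plan does not show that it yields~(\ref{4b}) with those specific weights. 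Your treatment of the $n=1$ specialization is correct (for $n=1$ both correction terms are identically zero, so the general formula — whatever its exact coefficients — collapses to $\widehat P_1 f=e^{-3f}P_1 f$), and the passage to $\widehat P_0 f=e^{-4f}P_0 f$ via the divergence $\widehat\delta_b$ and the $e^{-f}$ rescaling of the frame is also sound.
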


\section{The Proof of Main Theorem}

Let $(M,\xi,J_{0},\theta_{0})$ be a closed pseudohermitian $(2n+1)$-manifold.
As in (\ref{2.5}) below, we do not know the subellipticity for the highest
weight operator $L_{J,\theta}(G)$ of the linearization $\delta(2Q(J,\theta))$
for the torsion \ flow (\ref{1}). \ Instead we consider the CR torsion flow
(\ref{2015BBB}), with the particular choice of the contact form $\theta_{0}$
in $\xi$ and$\ \gamma_{0}$ as in (\ref{2018AAA1}), in which the highest weight
operator $L_{J,\theta}(G)$ of the linearization $\delta(2Q(J,\theta))$ as in
(\ref{2015A}) is subelliptic. We refer to the \textbf{step I} of the proof of
Theorem \ref{TA} below for details.

In the following, we first set up the CR torsion flow (\ref{2015BBB}) from the
point view of finding its  gauge-fixing condition or integrability condition
which is a crucial step for the subellipticity of the linearization
(\ref{2015A}). \ More precisely, we consider the following flow
\begin{subequations}
\begin{equation}
\left\{
\begin{array}
[c]{l}%
\frac{\partial J}{\partial t}=2F_{J,\theta},\\
\frac{\partial\theta}{\partial t}=2\eta\theta,
\end{array}
\right.  \label{2015c}%
\end{equation}
on $(M,\xi,J,\theta,\varphi)\times\lbrack0,T)$ with
\end{subequations}
\[
F_{J,\theta}:=F_{\alpha}{}^{\overline{\beta}}\theta^{\alpha}\otimes
Z_{\overline{\beta}}+F_{\overline{\alpha}}{}^{\beta}\theta^{\overline{\alpha}%
}\otimes Z_{\beta}%
\]
and
\[
\eta:=-e^{\varphi}[W+(n+1){\Delta_{b}}\varphi-n(n+1)\varphi_{\alpha}{}%
\varphi{}^{\overline{\alpha}}].
\]
Here $J=i\theta^{\alpha}\otimes Z_{\alpha}-i\theta^{\overline{\alpha}}\otimes
Z_{\overline{\alpha}}$ and $F_{\alpha}{}^{\overline{\beta}}:=e^{\varphi
}[A_{\alpha}{}^{\overline{\beta}}-i\varphi_{\alpha}{}^{\overline{\beta}%
}-i\varphi_{\alpha}{}\varphi{}^{\overline{\beta}}].$ We come out with the
following flow
\begin{equation}
\left\{
\begin{array}
[c]{l}%
\frac{\partial J}{\partial t}=2e^{\varphi}(A_{\alpha}{}^{\overline{\beta}%
}-i\varphi_{\alpha}{}^{\overline{\beta}}-i\varphi_{\alpha}{}\varphi
{}^{\overline{\beta}})\theta^{\alpha}\otimes Z_{\overline{\beta}}+Conj,\\
\frac{\partial\theta}{\partial t}=-2e^{\varphi}[W+(n+1){\Delta_{b}}%
\varphi-n(n+1)\varphi_{\overline{\alpha}}{}\varphi{}^{\overline{\alpha}%
}]\theta,
\end{array}
\right.  \label{2015bb}%
\end{equation}
on $(M,\xi,J,\theta)\times\lbrack0,T).$ \ Thus, for \ $\theta=e^{2\lambda
}\theta_{0},$ we denote
\[
\lambda^{\perp}:=\lambda-\lambda^{\ker}%
\]
with
\[
\lambda^{\ker}\in\ker\overset{0}{P_{\beta}}\text{ \ \textrm{i.e.\ }\ }%
\overset{0}{P_{\beta}}(\lambda^{\ker})=0.
\]
Then, by choosing $\varphi=2\lambda^{\perp}$, the torsion $\widetilde
{A_{\alpha}{}^{\overline{\beta}}}$ and scalar curvature $\widetilde{W}$ with
respect to
\[
\widetilde{\theta}=e^{-\varphi}\theta=e^{-\varphi}e^{2\lambda}\theta
_{0}=e^{2\lambda^{\ker}}\theta_{0}%
\]
will be
\[
\left\{
\begin{array}
[c]{ccl}%
\widetilde{A_{\alpha}{}^{\overline{\beta}}} & = & e^{\varphi}(A_{\alpha}%
{}^{\overline{\beta}}-i\varphi_{\alpha}{}^{\overline{\beta}}-i\varphi_{\alpha
}{}\varphi{}^{\overline{\beta}}),\\
\widetilde{W} & = & e^{\varphi}(W+(n+1){\Delta_{b}}\varphi-n(n+1)\varphi
_{\overline{\alpha}}{}\varphi{}^{\overline{\alpha}}).
\end{array}
\right.
\]
It follows that (\ref{2015bb}) implies
\begin{equation}
\left\{
\begin{array}
[c]{l}%
\frac{\partial J}{\partial t}=2\widetilde{A}_{J,\widetilde{\theta}},\\
\frac{\partial\theta}{\partial t}=-2\widetilde{W}\theta,\ \ \\
\widetilde{\theta}=e^{2\lambda^{\ker}}\theta_{0},\ \ \theta=e^{2\lambda}%
\theta_{0}.
\end{array}
\right.  \label{3b}%
\end{equation}
In particular%
\[
\frac{\partial\lambda^{\ker}}{\partial t}=-\widetilde{W}^{\ker}%
\text{\ \ \ \textrm{and}\ \ }\frac{\partial\lambda^{\perp}}{\partial
t}=-\widetilde{W}^{\perp}.
\]
Furthermore, we compute for $\widetilde{\theta}=e^{-\varphi}\theta$%
\[%
\begin{array}
[c]{ccl}%
\frac{\partial\widetilde{\theta}}{\partial t} & = & -2(\widetilde{W}+\frac
{1}{2}\varphi_{t})\widetilde{\theta}\\
& = & -2(\widetilde{W}+\frac{\partial\lambda^{\perp}}{\partial t}%
)\widetilde{\theta}\\
& = & -2(\widetilde{W}-\widetilde{W}^{\perp})\widetilde{\theta}.
\end{array}
\]
Hence, with the choice of $\varphi(x,t)=2\lambda^{\perp}(x,t),$ (\ref{2015bb})
is equivalent to the following modified torsion flow \ on $M\times\lbrack0,T)$
with the maximal time $T:$
\begin{equation}
\left\{
\begin{array}
[c]{l}%
\frac{\partial J}{\partial t}=2\widetilde{A}_{J,\widetilde{\theta}},\\
\frac{\partial\widetilde{\theta}}{\partial t}=-2(\widetilde{W}-\widetilde
{W}^{\perp})\widetilde{\theta},\ \text{\textrm{\ }}\\
\text{\textrm{\ }}\widetilde{\theta}=e^{2\lambda^{\ker}}\theta_{0}%
\end{array}
\right.  \label{3}%
\end{equation}
coupled with
\begin{equation}
\frac{\partial\lambda^{\perp}}{\partial t}=-\widetilde{W}^{\perp}.\label{3a}%
\end{equation}

According to (\ref{3}) which is the deformation with respect to $e^{2\lambda
^{\ker}}\theta_{0}$ only, then we in particular consider the original modified
torsion flow (\ref{2015bb}) with the special choice
\begin{equation}
\frac{1}{2}\varphi(x,t)=\lambda^{\perp}(x,t)\text{ \ \ \ and \ \ }%
\ \ \lambda^{\perp}(x,0)=\widetilde{W}^{\perp}(x,0). \label{2018A}%
\end{equation}
Then if \textbf{the initial Tanaka-Webster scalar curvature }$\widetilde
{W}(x,0)$ w.r.t. $e^{\gamma_{0}}\theta_{0}$ is \textbf{pluriharmonic, i.e. }
\[
\widetilde{W}^{\perp}(x,0)=0,
\]
we have
\[
\text{\ }\widetilde{W}^{\perp}(x,t)=0=\lambda^{\perp}(x,t)
\]
as long as the solution exists for all $t\in\lbrack0,T)$. \ In this case, it
follows that (\ref{2015bb}) is equivalent to the following torsion flow \ on
$M\times\lbrack0,T)$ with the maximal time $T:$ \textrm{\ }%
\begin{equation}
\left\{
\begin{array}
[c]{l}%
\frac{\partial J}{\partial t}=2\widetilde{A}_{J,\widetilde{\theta}},\\
\frac{\partial\widetilde{\theta}}{\partial t}=-2\widetilde{W}\widetilde
{\theta},\\
\widetilde{\theta}=e^{2\gamma}\theta_{0};\ \overset{0}{P_{\beta}}(\gamma
_{0})=0=\widetilde{W}^{\perp}(0).
\end{array}
\right.  \label{2018}%
\end{equation}

Finally by replacing $\widetilde{\theta}$ by $\theta$ without ambiguity, we
rewrite (\ref{2018}) as the following tosion flow on $(M,\xi,J,\theta
)\times\lbrack0,T):$
\begin{equation}
\left\{
\begin{array}
[c]{l}%
\frac{\partial J}{\partial t}=2A_{J,\theta},\\
\frac{\partial\theta}{\partial t}=-2W\theta,\ \\
\theta=e^{2\gamma}\theta_{0};\ \overset{0}{P_{\beta}}(\gamma_{0})=0=W^{\perp
}(0).
\end{array}
\right.  \label{2015b}%
\end{equation}
Note that
\[
W^{\perp}(x,t)=0=\gamma^{\perp}(x,t)
\]
as long as the solution exists for all $t\in\lbrack0,T)$.

Next for $\theta=e^{2\lambda}\theta_{0},$ it follows from Lemma \ref{lTA},
Corollary \ref{cTA} and commutation relations, we compute%
\[%
\begin{array}
[c]{ccl}%
\eta_{\beta}+inF_{\beta}{}^{\overline{\alpha}},_{\overline{\alpha}} & = &
-W_{\beta}+inA_{\beta}{}^{\overline{\alpha}},_{\overline{\alpha}}\\
& = & -e^{-3\gamma}[\overset{0}{W}_{\beta}-in\overset{0}{A}_{\beta}%
{}^{\overline{\alpha}},_{\overline{\alpha}}-2(n+2)\overset{0}{P}_{\beta}%
\gamma]\\
&  & -2ne^{-2\gamma}(R_{\alpha\overline{\beta}}-\frac{W}{n}h_{\alpha
\overline{\beta}})\gamma^{\overline{\beta}}\\
& = & -e^{-3\gamma}[\overset{0}{W}_{\beta}-in\overset{0}{A}_{\beta}%
{}^{\overline{\alpha}},_{\overline{\alpha}}]\\
&  & -2ne^{-2\gamma}(R_{\alpha\overline{\beta}}-\frac{W}{n}h_{\alpha
\overline{\beta}})\gamma^{\overline{\beta}}.
\end{array}
\]
Hence
\begin{equation}
\eta_{\beta}+inF_{\beta}{}^{\overline{\alpha}},_{\overline{\alpha}%
}=\mathrm{l.o.t.in}\ \gamma.\label{Bianchi}%
\end{equation}
We observe that (\ref{Bianchi}) will be the gauge-fixing condition or
integrability condition for the flow (\ref{2015b}) below.

We are ready to prove the existence of short-time solution for the torsion
flow (\ref{2015BBB}) on $(M,\xi,J,\theta)\times\lbrack0,T):$

\begin{proof}
[Proof of Theorem \ref{TA}]{\noindent\textbf{Step I.} \textbf{We first}
\textbf{find the linearization of the flow}} (\ref{2015BBB}) :

For\ $\ \theta=e^{2\gamma}\theta_{0}$ with $\overset{0}{P_{\beta}}(\gamma)=0$,
we can rewrite the flow (\ref{2015BBB}) as
\begin{equation}%
\begin{array}
[c]{c}%
\frac{\partial}{\partial t}(J\oplus\theta)=2Q(J,\theta).
\end{array}
\label{2.0}%
\end{equation}
Here \ $Q(J,\theta):=F_{J,\theta}\oplus\eta\theta$ \ with
\[%
\begin{array}
[c]{ccl}%
F_{J,\theta} & := & A_{J,\theta},\\
\eta & := & -W.
\end{array}
\]
with $\overset{0}{P_{\beta}}(W)=0$. We use $\delta J,$ $\delta\theta$ to
denote the variations of $J$ and $\theta$, respectively. Set
\[
\delta J=2E\ \ \mathrm{and}\ \ \delta\theta=2h\theta
\]
with $\overset{0}{P}_{\beta}(h)=0.$ Here $E$ is an endomorphism:
$\xi\rightarrow\xi$ satisfying $J\circ E+E\circ J=0$ and $h$ is a smooth
function. Let $\delta_{J}$ and $\delta_{\theta}$ denote the linearization
operators with respect to $J$ and $\theta$. From section $3$, we see that%
\begin{equation}%
\begin{array}
[c]{ccl}%
\delta_{J}A_{\alpha\beta} & = & i(E_{\alpha\beta},_{0})\\
\delta_{\theta}A_{\alpha\beta} & = & 2(ih_{\alpha\beta})+l.o.t.
\end{array}
\label{2.1}%
\end{equation}
and%
\begin{equation}%
\begin{array}
[c]{ccl}%
\delta_{J}W & = & i(E{_{\gamma\alpha}},^{\gamma\alpha}-{E{_{\overline{\gamma
}\overline{\alpha}}},^{\overline{\gamma}\overline{\alpha}}})+l.o.t.\\
\delta_{\theta}W & = & -2(n+1)({\Delta_{b}h)}+l.o.t.
\end{array}
\label{2.2}%
\end{equation}
with $\overset{0}{P_{\beta}}(\gamma)=0$ and $\overset{0}{P}_{\beta}(h)=0.$ We
first compute the linearization of $2A_{J,\theta}$. Let $\mathcal{O}_{m}$
denote an operator of weight $\leq m$ (see \cite[page 234]{cl1}), and define
the total variation as $\delta=\delta_{J}+\delta_{\theta}$. Put
\[
G:=E\oplus h\theta.
\]
With (\ref{2.1}) we compute the variation of the torsion as
\begin{equation}
\delta(2A_{J,\theta})=2\operatorname{Re}\left(  [2iE{_{\alpha}}^{\overline
{\beta}},_{0}+4ih_{\alpha}{}^{\overline{\beta}}]\theta^{\alpha}\otimes
Z_{\overline{\beta}}\right)  +\mathcal{O}_{1}(G).\label{2.4a}%
\end{equation}
The linearization of $-2W\theta$ can be computed with (\ref{2.2}),%
\begin{equation}%
\begin{array}
[c]{ccl}%
\delta(-2W\theta) & = & \{2i({E{_{\overline{\gamma}\overline{\alpha}}%
},^{\overline{\gamma}\overline{\alpha}}-}E{_{\gamma\alpha}},^{\gamma\alpha})\\
&  & +4(n+1)(\Delta_{b}h)+\mathcal{O}_{1}(G)\}\theta
\end{array}
\label{2.4}%
\end{equation}
With these linearizations, we write the highest weight operator $L_{J,\theta
}(G)$ of the linearization $\delta(2Q(J,\theta))$ as%
\begin{equation}%
\begin{array}
[c]{l}%
\ L_{J,\theta}(G)\\
=2\operatorname{Re}\left(  \{[2iE{_{\alpha}}^{\overline{\beta}},_{0}%
+4ih_{\alpha}{}^{\overline{\beta}}]\}\theta^{\alpha}\otimes Z_{\overline
{\beta}}\right)  \\
\ \ \oplus\lbrack2i({E{_{\overline{\beta}\overline{\alpha}}},^{\overline
{\beta}\overline{\alpha}}-}E{_{\beta\alpha}},^{\beta\alpha})+4(n+1)\Delta
_{b}h]\theta.
\end{array}
\label{2.5}%
\end{equation}

{\noindent Next }we define the linear operator $H(J,\theta)$ by%
\begin{equation}
H(J,\theta)(G):=(h_{\alpha}+inE_{\alpha\beta},^{\beta})-\mathcal{O}_{0}(G)
\label{eq:def_H}%
\end{equation}
with $G=E\oplus h\theta$ and $h\in\ker(\overset{0}{P}_{\beta}).$ Here
$\mathcal{O}_{0}(G)$ is the lower-order terms as in (\ref{Bianchi}).

Thus $Q(J,\theta)$ satisfies the condition as in (\ref{Bianchi}) :%
\[
H(J,\theta)Q(J,\theta)=(\eta_{\alpha}+inF_{\alpha}{}^{\overline{\beta}%
},_{\overline{\beta}})-\mathcal{O}_{0}(G)=0.
\]

Now from the following commutation relations
\[%
\begin{array}
[c]{l}%
E_{\alpha\beta},^{\beta}{}_{\gamma}-E_{\alpha\gamma},^{\beta}{}_{\beta
}=i(n-1)E_{\alpha\gamma,0}+R_{\alpha\overline{\sigma}}E^{\overline{\sigma}}%
{}_{\gamma}-R_{\alpha}{}^{\beta}{}_{\gamma\overline{\sigma}}E^{\overline
{\sigma}}{}_{\beta},\\
E_{\alpha\gamma},_{\beta}{}^{\beta}-E_{\alpha\gamma},^{\beta}{}_{\beta
}=inE_{\alpha\gamma,0}+R_{\gamma}{}^{\sigma}E_{\alpha\sigma}+R_{\alpha}%
{}^{\sigma}E_{\sigma\gamma}.
\end{array}
\]
We derive that on the subspace
\begin{equation}
\ker H(J,\theta)=\{G=E\oplus h\theta|\ (h_{\alpha}+inE_{\alpha\beta},^{\beta
})-\mathcal{O}_{0}(G)=0\}\label{2.9}%
\end{equation}
the following identities hold
\begin{equation}%
\begin{array}
[c]{rcl}%
(\Delta_{b}h) & = & in(E_{\overline{\alpha}\overline{\beta}},^{\overline
{\beta}\overline{\alpha}}-E_{\alpha\beta},^{\beta\alpha})+\mathcal{O}%
_{1}(G),\\
(\Delta_{b}E_{\alpha}{}^{\overline{\beta}}) & = & [i(2-n)E_{\alpha}%
{}^{\overline{\beta}},_{0}+\frac{2}{n}ih_{\alpha}{}^{\overline{\beta}%
}]+\mathcal{O}_{1}(G).
\end{array}
\label{2.8a}%
\end{equation}
Here we have used the gauge-fixing condition
\[
(h_{\alpha}+inE_{\alpha\beta},^{\beta})=\mathcal{O}_{0}(G).
\]

Therefore it follows from (\ref{2.5}) that the highest weight operator
$L_{J,\theta}(G)$ of the linearization $2\delta(Q(J,\theta))$ is%
\begin{equation}%
\begin{array}
[c]{l}%
L_{J,\theta}(E\oplus h\theta)\\
=\{4n\operatorname{Re}[(\mathcal{L}_{\alpha}E_{\alpha}{}^{\overline{\beta}%
}))\theta^{\alpha}\otimes Z_{\overline{\beta}}]\oplus2(2n+2+\frac{1}%
{n})({\Delta}_{b}h)\theta\}\\
=\{4n\operatorname{Re}[\mathcal{L}_{\alpha}(E_{\alpha}{}^{\overline{\beta}%
})\theta^{\alpha}\otimes Z_{\overline{\beta}}]\oplus2(2n+2+\frac{1}%
{n})({\Delta}_{b}h)\theta\}\\
=\{4n\operatorname{Re}[\mathcal{L}_{\alpha}(E_{\alpha}{}^{\overline{\beta}%
})\theta^{\alpha}\otimes Z_{\overline{\beta}}]\oplus2(2n+2+\frac{1}%
{n})({\Delta}_{b}h)\theta\}
\end{array}
\label{2015A}%
\end{equation}
on $\ker H(J,\theta)(E\oplus h\theta)$ with
\begin{equation}
\mathcal{L}_{\alpha}={\Delta}_{b}-i\alpha T,\ \ \ \alpha=-\frac{(n-1)^{2}}%
{n}=-(n-2+\frac{1}{n}). \label{2018a}%
\end{equation}

\textbf{Step 2. The sub-ellipticity and short-time existence :}

It follows from (\ref{2018a}) that the Folland-Stein operator $\mathcal{L}%
_{\alpha}$ with this particular value
\[
\alpha=-(n-2+\frac{1}{n})
\]
is subelliptic (see \cite{fs}). \ In particular $\alpha=0$ if $n=1$ and then
$\mathcal{L}_{0}=\Delta_{b}$ which is subelliptic. Moreover, it follows from
(\ref{Bianchi}) and (\ref{2.6}), (\ref{2.6a}) below that we may then use (see
remark \ref{r2})
\begin{equation}
H(J,\theta)(G)=((h_{\alpha}+inE_{\alpha\beta},^{\beta})-\mathcal{O}%
_{0}(G)=0.\label{2.19}%
\end{equation}
as the gauge-fixing condition (see appendix ) for the torsion flow
(\ref{2015BBB}). It follows that $L$ as shown in (\ref{2015A}) maps $\ker
H(J,\theta)$ into itself modulo terms of lower weight. Since $L$ is
subelliptic on $\ker H(J,\theta)$, by comparing \cite[Lemma 3.3]{cl1}, once
the highest weight operator $L_{J,\theta}(G)$ of the linearization
$2\delta(Q(J,\theta))$ on $\ker H(J,\theta)$ is subelliptic, then the
short-time existence of the torsion flow (\ref{2015BBB}) follows easily from
the linear theory of the Folland-Stein space and section $5$ as in \cite{cl1}
under the gauge-fixing condition (\ref{2.19}). For completeness, we will
discuss the gauge-fixing condition more details as in Appnedix for $n=1$.
\end{proof}

{\noindent}

To make the proof more clear, we make a remark as following :

\begin{remark}
\label{r2} As shown in the appendix (\ref{2.6}), $\ker\tilde{B}_{J}$ is the
natural gauge-fixing condition for
\[
\left\{
\begin{array}
[c]{l}%
\frac{\partial J}{\partial t}=2A_{J,\theta},\\
\frac{\partial\theta}{\partial t}=-2W\theta,\\
\text{\textrm{\ }}\theta=e^{2\lambda}\theta_{0}.
\end{array}
\right.
\]
However, we do not know the subellipticity for the highest weight operator
$L_{J,\theta}(G)$ of the linearization $\delta(2Q(J,\theta))$ as in
(\ref{2.5}). Instead, it follows from (\ref{Bianchi}) that (\ref{2.19}) is the
right gauge-fixing condition for the current type CR torsion flow
\[
\left\{
\begin{array}
[c]{l}%
\frac{\partial J}{\partial t}=2A_{J,\theta},\\
\frac{\partial\theta}{\partial t}=-2W\theta,\ \\
\theta=e^{2\gamma}\theta_{0};\ \overset{0}{P_{\beta}}(\gamma_{0})=0=W^{\perp
}(0).
\end{array}
\right.
\]
As we have proved that the highest weight operator $L_{J,\theta}(G)$ of the
linearization $\delta(2Q(J,\theta))$ as in (\ref{2015A}) is subelliptic.
\end{remark}

\appendix

\section{ A\textbf{lternative Proof via the Lions-Lax-Milgram Theorem }
\textbf{ \ }}

A. \textbf{F}{\textbf{inding the gauge-fixing condition }}\textbf{for }$n=1$
{\textbf{:}}

\ For simplicity, let $(M,\xi,J_{0},\theta_{0})$ be a closed pseudohermitian
$3$-manifold. We first observe that the highest weight operator $L_{J,\theta
}(G)$ is self-adjoint, i.e., $L^{\ast}$ $=$ $L$ with respect to the inner
product:%
\begin{align}
&  <E+h\theta,F+k\theta>\label{AAA}\\
&  =\int_{M}[E_{11}F_{\bar{1}\bar{1}}+F_{11}E_{\bar{1}\bar{1}}+2hk]\theta
\wedge d\theta.\nonumber
\end{align}
Let $X_{f}$ be the contact vector field for the real-valued function $f\in
C^{2}(M)$. Then one has
\[
\mathcal{L}_{X_{f}}\theta=-(Tf)\theta,
\]
and
\[
L_{X_{f}}J=2B_{J}^{\prime}f:=2(f^{\overline{1}}{}_{1}+iA_{1}{}^{\overline{1}%
})\theta^{1}\otimes Z_{\overline{1}}+2(f^{1}{}_{\overline{1}}-iA_{\overline
{1}}{}^{1})\theta^{\overline{1}}\otimes Z_{1}%
\]
See for instance \cite{cacc}. In particular, we have $X_{f}=T$ for $f=1$, and
the above equation reduces to
\[
\mathcal{L}_{T}J=2JA_{J,\theta}.
\]

Recall that $X_{f}$ is an infinitesimal contact diffeomorphism if and only if
$L_{X_{f}}\theta=\mu\theta$ for some function $\mu$. So an infinitesimal
contact orbit reads%
\[
L_{X_{f}}J+L_{X_{f}}\theta=2B_{J}^{\prime}f-(Tf)\theta:=2\tilde{B}_{J}%
^{\prime}f.
\]
An orthogonal infinitesimal slice is described by $\tilde{B}_{J}(G)$ where
$\tilde{B}_{J}$ is the adjoint of $\tilde{B}_{J}^{\prime}$ with respect to the
inner product (\ref{AAA}). By a direct computation (\cite[Lemma 3.4]{cl2}),
one obtains%
\begin{equation}%
\begin{array}
[c]{cll}%
\tilde{B}_{J}(G) & = & B_{J}(E)+h_{,0}\\
& = & E_{11,\bar{1}\bar{1}}+E_{\bar{1}\bar{1},11}+h_{,0}+iA_{11}E_{\bar{1}%
\bar{1}}-iA_{\bar{1}\bar{1}}E_{11}\\
& = & E_{11,\bar{1}\bar{1}}+E_{\bar{1}\bar{1},11}+h_{,0}+\mathcal{O}_{1}(E,h).
\end{array}
\label{2.6}%
\end{equation}
Observe that $F_{J,\theta}\oplus\eta\theta$ satisfies%
\[
(\eta_{1}+iF_{1}{}^{\overline{1}},_{\overline{1}})-\mathcal{O}_{0}(F,\eta)=0
\]
and then
\begin{align}
&  \tilde{B}_{J}(Q)\label{2.6a}\\
\  &  =\tilde{B}_{J}(F_{J,\theta}\oplus\eta\theta)\nonumber\\
&  =F_{11,\bar{1}\bar{1}}+F_{\bar{1}\bar{1},11}+\eta,_{0}+\mathcal{O}%
_{1}(F,\eta)=0.\nonumber
\end{align}
by the CR Bianchi identity. \ On the other hand, it follows from (\ref{2.9})
that
\[
(E_{11,\bar{1}\bar{1}}+E_{\bar{1}\bar{1},11}+h_{,0})+\mathcal{O}_{1}(E,h)=0
\]
on $\ker H(J,\theta).$ It follows from (\ref{Bianchi}) that we may then use
(see remark \ref{r2})
\[
H(J,\theta)(G)=((h_{1}+inE_{11},^{1})-\mathcal{O}_{0}(G)=0.
\]
as the gauge-fixing condition for the torsion flow (\ref{2015BBB}).

\bigskip

\textbf{B.} \textbf{Alternative proof of the short-time existence via the
Lions-Lax-Milgram theorem for }$n=1$\textbf{ }{\textbf{:}} \

\begin{proof}
We claim that $L$ is subelliptic on $\ker H(J,\theta)(G)$. Note that $L^{\ast
}L=L^{2}$. We denote that
\begin{equation}
L^{\ast}L(E+h\theta)=F+k\theta\label{2.7}%
\end{equation}
with
\begin{equation}
F=4({\Delta}_{b}^{2}E_{11}\theta^{1}\otimes Z_{\bar{1}}+{\Delta}_{b}%
^{2}E_{\bar{1}\bar{1}}\theta^{\bar{1}}\otimes Z_{1}):=4{\Delta}_{b}^{2}E
\label{2.12}%
\end{equation}
and
\begin{equation}
k=100{\Delta}_{b}^{2}h. \label{2.13}%
\end{equation}
Substituting (\ref{2.12}) and (\ref{2.13}) into (\ref{2.7}), we compute%
\begin{equation}%
\begin{array}
[c]{ccl}%
||L(E+h\theta)||_{L^{2}}^{2} & = & <L^{\ast}L(E+h\theta),E+h\theta>\\
& = & <4{\Delta}_{b}^{2}E,\ E>+200\int_{M}(\Delta_{b}^{2}h)h\theta\wedge
d\theta\\
& = & 8||{\Delta}_{b}E||_{L^{2}}^{2}+200||\Delta_{b}h||_{L^{2}}^{2}.
\end{array}
\label{2.14}%
\end{equation}
Since $\Delta_{b}$ is subelliptic, we conclude that
\begin{equation}
||E||_{S_{2}}^{2}+||h||_{S_{2}}^{2}\leq C(||L(E+h\theta)||_{L^{2}}%
^{2}+||E+h\theta||_{L^{2}}^{2}) \label{2.17}%
\end{equation}
for some constant $C$ $>$ $0,$ where $||$ $\cdot$ $||_{S_{2}}$ denotes the
norm on the Folland-Stein space $S_{2}$ $:=$ $\{(E,h)$ $\in$ $L^{2}$ $|$
$P(E,h)$ $\in$ $L^{2}$ whenever $w(P)$ $\leq$ $2\}$ as in (\ref{KKK}).
Finally, we can obtain estimates of higher order derivatives from (\ref{2.17})
and interpolation inequalities in the Folland-Stein spaces $S_{k}$ by
observing that $[L,$ $\nabla_{Z_{1}}]$ and $[L,$ $\nabla_{Z_{\bar{1}}}]$ are
operators of weight $2$ :%
\begin{equation}
||E||_{S_{k+2}}^{2}+||h||_{S_{k+2}}^{2}\leq C(||L(E+h\theta)||_{S_{k}}%
^{2}+||E+h\theta||_{L^{2}}^{2}) \label{2.18}%
\end{equation}
for $E,$ $h$ satisfying the gauge-fixing condition.

On the other hand%
\begin{equation}%
\begin{array}
[c]{ccl}%
<L(E+h\theta),\ E+h\theta> & = & <2{\Delta}_{b}E,\ E>+20\int_{M}(\Delta
_{b}h)h\theta\wedge d\theta\\
& = & -2\int_{M}|\nabla_{b}E|^{2}\theta\wedge d\theta-20\int_{M}|\nabla
_{b}h|^{2}\theta\wedge d\theta\\
& = & -2||\nabla_{b}E||_{L^{2}}^{2}-20||\nabla_{b}h||_{L^{2}}^{2}.
\end{array}
\label{2.18a}%
\end{equation}
It follows from (\ref{2.18a}) and the Poincare-type inequality (\cite{j})
that
\begin{equation}
-<L(G),\ G>\geq C(||\nabla_{b}G||_{L^{2}}^{2}+||G||_{L^{2}}^{2}) \label{2.20}%
\end{equation}
on $\ker H(J,\theta)(G)$. Hence $<L(\cdot),\ \cdot$\ $>$ is coercive
(\cite{gt}). \

Note that Theorem 21.1 from \cite{fs} gives the regularity in space and
Theorem 4.6 in \cite{cl1} gives the regularity in time.

Let $J_{0}$ be any $C^{\infty}$\ smooth oriented $CR$\ structure compatible
with $\xi$ and $\theta_{0}$\ be any $C^{\infty}$\ smooth oriented contact form
for $\xi.$\ Based on the subelliptic estimates (\ref{2.18}), the coercive
property (\ref{2.20}) plus the implicit function theorem, we have, for any
integer $m,$\textit{\ }there exists $\delta>0$ and a unique $C^{m}$-solution
$J(t)$\ and $\theta(t)$ to the torsion flow~(\ref{2015BBB}) on the interval
$[0,\delta)$ such that $(J(0),\theta(0)\,)=(J_{0},e^{2\gamma(0)}\theta_{0})$.
This is the so-called Lions-Lax-Milgram theorem as in \cite[Theorem 5.8]{gt},
\cite{tre}, \cite{y}, \cite{lm} for elliptic-type and Theorem 4.1. in chapter
IV of \cite{s} and a generalization due to J. L. Lions of the Lax-Milgram
Thoerem of \cite[Lemma 41.2]{tre} for parabolic-type.
\end{proof}


\begin{thebibliography}{99999}                                                                                            %


\bibitem[Bl]{bl}D.~Blair, \textit{Riemannian geometry of contact and
symplectic manifolds, } Progress in Mathematics, 203. Birkh\"{a}user Boston,
Inc., Boston, MA, 2002. xii+260 pp. ISBN: 0-8176-4261-7.

\bibitem[CaCC1]{cacc1}H.-D. Cao, S.-C. Chang and C.-W. Chen, On
three-dimensional CR Yamabe solitons, J. Geom. Anal. 28 (2018), 335--359.

\bibitem[CaCC2]{cacc2}H.-D. Cao, S.-C. Chang and C.-W. Chen, On the
classification of CR torsion solitons, preprint.

\bibitem[CC]{cc}J. Cao and S.- C. Chang, Pseudo-Einstein and $Q$-flat Metrics
with Egenvalue Estimates on CR-hypersurfaces, Indiana Univ. Math. Journal, 56,
No.6, (2007), 2839-2857.

\bibitem[CCC]{ccc}S.-C. Chang, J.-H. Cheng and H.-L. Chiu, The Fourth-order
$Q$-curvature flow on a CR $3$-manifold, Indiana Univ. Math. J., Vol. 56, No.
4 (2007), 1793-1826.

\bibitem[CCW]{ccw}S.-C. Chang, H.-L. Chiu and C.-T. Wu, The Li-Yau-Hamilton
inequality for Yamabe flow on a closed CR $3$-manifold, Transactions of AMS,
Vol 362 (2010), 1681--1698.

\bibitem[CH]{ch}S.-S.~Chern, R.~S.~Hamilton, On Riemannian metrics adapted to
three-dimensional contact manifolds. With an appendix by Alan Weinstein,
Lecture Notes in Math., 1111, Workshop Bonn 1984 (Bonn, 1984), 279, Springer,
Berlin, 1985.

\bibitem[Chow]{chow}B. Chow, S.-C. Chu, D. Glickenstein, C. Guenther, J.
Isenberg, T. Ivey, D. Knopf, P. Lu, F. Luo, and L. Ni, The Ricci Flow :
Techniques and Applications, Part I : Geometric Aspects, Mathematical Surveys
and Monographs Vol. 135 (2007), AMS.

\bibitem[CKL]{ckl}S.-C. Chang, T.-J. Kuo and S.-H. Lai, Li-Yau Gradient
Estimate and Entropy Formulae for the CR heat equation in a Closed
Pseudohermitian $3$-manifold, J. Differential Geom. 89 (2011), 185-216.

\bibitem[CKW]{ckw}S.-C. Chang, O. van Koert and C.-T. Wu, The torsion flow on
a closed pseudohermitian $3$-manifold, arXiv:1305.5391.

\bibitem[CLu]{clu}S.-C. Chang and P. Lu, \textit{Evolution of Yamabe constant
under Ricci flow}, AGAG 31 (2007), 147-153.

\bibitem[CL1]{cl1}J.-H. Cheng and J. M. Lee, \textit{The Burns-Epstein
invariant and deformation of the CR structures}, Duke Math. J. 60 (1990), 221-254.

\bibitem[CL2]{cl2}J.-H. Cheng and J. M. Lee, A Local Slice Theorem for
3-Dimensional CR Structures, American Journal of Mathematics, Vol. 117, No. 5
(1995), 1249-1298.

\bibitem[De]{de}D. DeTurck, Deforming metrics in the direction of their Ricci
tensors, J. Differential Geom. 18 (1983), 157-162.

\bibitem[FH]{fh}C. Fefferman and K. Hirachi, Ambient Metric Construction of
$Q$-Curvature in Conformal and $CR$ Geometries, Math. Res. Lett., 10, No. 5-6
(2003), 819-831.

\bibitem[FS]{fs}G. B. Folland and and E. M. Stein, Estimates for the
$\overline{\partial}_{b}$ complex and analysis on the Heisenberg group, Comm.
Pure Appl. Math. 27 (1974), 429-522.

\bibitem[GL]{gl}C. R. Graham and J. M. Lee, \textit{Smooth solutions of
degenerate Laplacians on strictly pseudoconvex domains}, Duke Math. J., 57
(1988), 697-720.

\bibitem[GT]{gt}D. Gilbarg and N.S. Trudinger, Elliptic partial differential
equations of second order, Springer-Verlag, Berlin-Heidelberg-New York-Tokyo, 1983.

\bibitem[Hi]{hi}K. Hirachi, \textit{Scalar pseudohermitian invariants and the
Szeg\"{o} kernel on three-dimensional CR manifolds, }Complex Geometry, Lect.
Notes in Pure and Appl. Math. 143, 67-76, Dekker, 1993.

\bibitem[H1]{h1}R.S. Hamilton, Three-Manifolds with Positive Ricci Curvature,
J. Diff. Geom. 17 (1982), 255-306.

\bibitem[H2]{h2}R.S. Hamilton, The Ricci flow on surfaces, Contemp. Math. 71
(1988), 237-262, Amer. Math. Soc., Providence, RI.

\bibitem[H3]{h3}R.S. Hamilton, The inverse function theorem of Nash and Moser,
Bull. (new series) Amer. Math. Soc. 7 (1982), 65-222.

\bibitem[K1]{k1}J. J. Kohn, Estimates for $\overline{\partial}_{b}$ on Compact
Pseudoconvex $CR$ Manifolds, Proc. of Symposia in Pure Math., 43 (1985), 207--217.

\bibitem[K2]{k2}J. J. Kohn, The Range of the Tangential Cauchy-Riemann
Operator, Duke. Math. J., 53 (1986) 525--545.

\bibitem[J]{j}D. Jerison: The Poincare inequality for vector fields satisfying
Hormander's condition, Duke Math. J. 53 (1986), 503-523.

\bibitem[L1]{l1}J. M. Lee, \textit{The Fefferman metric and pseudohermitian
invariants, }Trans. AMS 296 (1986), 411-429

\bibitem[L2]{l2}J. M. Lee, \textit{Pseudo-Einstein metric on CR manifolds,
}Amer. J. Math. 110 (1988), 157-178.

\bibitem[LM]{lm}P.D. Lax and A.N. Milgram, Parabolic equations, in
contributions to the theory of partial differential equations, Princeton, 1954.

\bibitem[Pe1]{pe1}G. Perelman, The Entropy Formula for the Ricci Flow and its
Geometric Applications. ArXiv: Math. DG/0211159.

\bibitem[Pe2]{pe2}G. Perelman, The Ricci Flow with Surgery on Three-manifolds, ArXiv:Math.DG/0303109.

\bibitem[Pe3]{pe3}G. Perelman, Finite Extinction Time for the Solutions to the
Ricci Flow on Certain Three-manifolds, ArXiv: Math.DG/0307245.

\bibitem[Pr]{pr}D.~Perrone, Homogeneous contact Riemannian three-manifolds,
Illinois J. Math. 42 (1998), no. 2, 243-256.

\bibitem[S]{s}R.E. Showalter, Monotone operators in Banach space and nonlinear
partial differential equations, Mathematical Surveys and Monographs 49.
Providence, RI: American Mathematical Society, 1997, xiv+278 pp.

\bibitem[T]{t}S. Tanno, Sasakian manifolds with constant $\phi$-holomorphic
sectional curvature, T\^{o}hoko Math. Journ. \textbf{21} (1969), 501-507.

\bibitem[Ta]{ta}N. Tanaka, \textit{A Differential Geometric Study on Strongly
Pseudoconvex Manifolds, }1975, Kinokuniya Co. Ltd., Tokyo.

\bibitem[Tre]{tre}F. Treves, Basic Linear Partial Differential Equations,
Academic Press, New York, 1975.

\bibitem[We]{we}S. M. Webster, \textit{Pseudohermitian structures on a real
hypersurface, }J. Diff. Geom. 13 (1978), 25-41.

\bibitem[Y]{y}K. Yosida, Functional analysis, Classics in Mathematics,
Springer-Verlag, Berlin, 1995. xii+501 pp.
\end{thebibliography}
\end{document}